\newtheorem*{acknowledgements}{Acknowledgements}
\newtheorem*{theorem*}{Theorem}
\newtheorem{theorem}{Theorem}
\newtheorem{corollary}{Corollary}
\newtheorem{lemma}[corollary]{Lemma}
\newtheorem{proposition}[corollary]{Proposition}
\theoremstyle{definition}
\newtheorem{definition}{Definition}
\newtheorem{example}{Example}
\newtheorem{remark}[example]{Remark}
\numberwithin{equation}{section}
\let\oldsqrt\sqrt
\def\sqrt{\mathpalette\DHLhksqrt}
\def\DHLhksqrt#1#2{%
\setbox0=\hbox{$#1\oldsqrt{#2\,}$}\dimen0=\ht0
\advance\dimen0-0.2\ht0
\setbox2=\hbox{\vrule height\ht0 depth -\dimen0}%
{\box0\lower0.4pt\box2}}
\DeclareFontFamily{U}{mathx}{\hyphenchar\font45}
\DeclareFontShape{U}{mathx}{m}{n}{
      <5> <6> <7> <8> <9> <10>
      <10.95> <12> <14.4> <17.28> <20.74> <24.88>
      mathx10
      }{}
\DeclareSymbolFont{mathx}{U}{mathx}{m}{n}
\DeclareMathAccent{\widecheck}{0}{mathx}{"71}
\renewcommand{\tilde}[1]{\widetilde{#1}}
\renewcommand{\hat}[1]{\widehat{#1}}
\newcommand\eps\varepsilon
\renewcommand\epsilon\varepsilon
\newcommand{\abs}[1]{\left\lvert #1 \right\rvert}
\newcommand{\smallabs}[1]{\lvert #1 \rvert}
\newcommand{\norm}[1]{\lVert #1 \rVert}
\newcommand\inner[1]{\langle #1 \rangle}
\newcommand\Mand{\text{ and }}
\newcommand\paperintro%
\newcommand\paperbody%
\newcommand\bbG{\mathbb{G}}
\newcommand\bbH{\mathbb{H}}
\newcommand\bbR{\mathbb{R}}
\newcommand\bbZ{\mathbb{Z}}
\newcommand\cA{\mathcal{A}}
\newcommand\cD{\mathcal{D}}
\newcommand\cE{\mathcal{E}}
\newcommand\cI{\mathcal{I}}
\newcommand\cU{\mathcal{U}}
\newcommand\mf[1]{\mathfrak{ #1}}
\DeclareMathAlphabet{\mathpzc}{OT1}{pzc}{m}{it}
\newcommand{\sbs}{\subset}
\def\@tocline#1#2#3#4#5#6#7{\relax
  \ifnum #1>\c@tocdepth 
  \else
    \par \addpenalty\@secpenalty\addvspace{#2}%
    \begingroup \hyphenpenalty\@M
    \@ifempty{#4}{%
      \@tempdima\csname r@tocindent\number#1\endcsname\relax
    }{%
      \@tempdima#4\relax
    }%
    \parindent\z@ \leftskip#3\relax \advance\leftskip\@tempdima\relax
    \rightskip\@pnumwidth plus4em \parfillskip-\@pnumwidth
    #5\leavevmode\hskip-\@tempdima
      \ifcase #1
       \or\or \hskip 1em \or \hskip 2em \else \hskip 3em \fi%
      #6\nobreak\relax
    \hfill\hbox to\@pnumwidth{\@tocpagenum{#7}}\par
    \nobreak
    \endgroup
  \fi}
\def\annu#1{_{%
  \vbox{\hrule height .2pt 
    \kern 1pt 
    \hbox{$\scriptstyle {#1}\kern 1pt$}%
  }\kern-.05pt 
  \vrule width .2pt 
}}
\def\keywords{\xdef\@thefnmark{}\@footnotetext}
\title[Optimal subelliptic super-Poincar\'e and isoperimetric inequalities]{Optimal subelliptic super-Poincar\'e and isoperimetric inequalities on stratified Lie groups}
\author{Yaozhong Qiu}
\address{Department of Mathematics, Imperial College London, 180 Queen’s Gate, London, SW7 2AZ, United Kingdom}
\email{y.qiu20@imperial.ac.uk}
\begin{document}
\begin{abstract}
We prove $q$-super-Poincar\'e inequalities, $q \in [1, 2]$, for a class of exponential power type probability measures defined in terms of a norm in a number of subelliptic settings, primarily on stratified Lie groups but also in the Grushin and Heisenberg-Greiner settings. Our results include generically optimal isoperimetric inequalities for such probability measures. 
\end{abstract}

\keywords{2020 \emph{Mathematics Subject Classification.} Primary 26D10, 60J60}
\keywords{\emph{Keywords.} Super-Poincar\'e inequality, Hardy inequality, isoperimetric inequality, stratified Lie groups}

\maketitle

\section{Introduction and main results}\label{S1}

In this paper, we prove super-Poincar\'e and isoperimetric inequalities for a class of exponential power type probability measures defined in terms of a norm in a number of subelliptic settings. The most striking feature of our results is that while our motivating examples of measures have supergaussian tails, the isoperimetric inequality they satisfy, which is generically optimal up to constants, corresponds to that of the subgaussian exponential power type measure $d\nu_r = Z^{-1}e^{-\abs{x}^r}dx$, $r \in (1, 2)$ regarded as a measure on the metric space $(\bbR^n, \abs{\cdot})$.  

If $(X, \cA, \mu)$ is a probability space and $L$ is a selfadjoint operator on $L^2(\mu)$ generating a Markov semigroup $(P_t)_{t \geq 0}$, then the $2$-super-Poincar\'e inequality introduced in \cite{wang2000functional, wang2002functional} is the family of inequalities 
\begin{equation}\label{2spi0}
    \int_X f^2d\mu \leq \epsilon \cE(f, f) + \beta_2(\epsilon)\left(\int_X \abs{f}d\mu\right)^2, \quad \epsilon > \epsilon_0 \geq 0, 
\end{equation}
where $f: X \rightarrow \bbR$ belongs to the domain $\cD(\cE)$ of the Dirichlet form $\cE$ associated to $L$ and $\beta_2: (\epsilon_0, \infty) \rightarrow \bbR$ is a nondecreasing function which we call the growth. As is to be expected in the theory of functional inequalities, \eqref{2spi0} has spectral theoretic content for $L$, ergodicity implications for $(P_t)_{t \geq 0}$, and enjoys relationships with other functional inequalities. In particular, \cite[Theorem~2.1]{wang2000functional} asserts \eqref{2spi0} is equivalent to the containment $\sigma_{\text{ess}}(-L) \sbs [\epsilon_0^{-1}, \infty)$ of the essential spectrum of $-L$, so that if $\epsilon_0 = 0$ then $-L$ has discrete spectrum. Moreover, according to \cite[Equation~1.6]{wang2000functional}, an analogue of the equivalence between the Poincar\'e inequality and exponential decay to equilibrium (see \cite[Theorem~4.2.5]{bakry2014analysis}) holds, namely \eqref{2spi0} holds if and only if
\[ \int_X (P_tf)^2d\mu \leq e^{-2t/r}\int_X f^2d\mu + \beta_2(r)(1 - e^{-2t/r})\left(\int_X \abs{f}d\mu\right)^2. \]
Lastly, \cite[Theorems~3.1~and~3.2]{wang2000functional} assert \eqref{2spi0} is equivalent to a family of defective $2$-$F$-Sobolev inequalities 
\begin{equation}\label{2fsob0}
    \int_X f^2F(f^2)d\mu \leq c_1\cE(f, f) + c_2, \quad \int_X f^2d\mu = 1 \Mand c_1, c_2 > 0.
\end{equation}
When $\beta_2(\epsilon) = e^{C\epsilon^{-\sigma}}$ for some $C > 0$, the case $\sigma = 1$ implies through \eqref{2fsob0} the defective logarithmic Sobolev inequality, see \cite[\S5]{bakry2014analysis}, while the case $\sigma > 1$ implies the Lata{\l}a-Oleszkiewicz inequality of \cite{latala2000between}, see also \cite[\S7.6.3]{bakry2014analysis}. We refer the reader to \cite{bakry2004functional, wang2006functional, bakry2014analysis} for more details on functional inequalities and their applications to spectral theory and geometry. 

It is known that under certain curvature lower bounds that a priori weaker functional inequalities such as the ($2$-super-)Poincar\'e inequality and logarithmic Sobolev inequalities can contain isoperimetric content. For instance, with respect to the classical Bakry-\'Emery curvature of \cite{bakry1985diffusions} and the generalisation developed by \cite{baudoin2012log, baudoin2014sub, baudoin2016curvature} adapted to the subelliptic setting, this is the content of \cite[Theorem~3.4]{wang2000functional} and \cite[Theorem~1.6]{baudoin2012log} respectively. However, the methods we shall use will sidestep curvature considerations entirely and instead we will prove an $L^1$-analogue of \eqref{2spi0}. It is traditional that such $L^1$-functional inequalities have isoperimetric content which shall be contained in Theorem \ref{thm1}. 

While a natural setting for our discussion is that of a stratified Lie group, in that the motivating example is a probability measure defined on the Heisenberg group, the study of the Grushin and Heisenberg-Greiner settings technically fall outside their scope and so we present our results first in general terms even if the examples we consider in the sequel have more structure than that which is necessary. Our setting will follow \cite{inglis2011u}. We consider the metric measure space $(\bbR^n, d, \mu)$ equipped with a subgradient $\nabla = (X_1, \cdots, X_\ell)$ and sublaplacian $\Delta = \nabla \cdot \nabla = \sum_{i=1}^\ell X_i^2$ where the $X_1, \cdots, X_\ell$ are a finite collection of possibly noncommuting vector fields whose divergence with respect to the Lebesgue measure $\xi$ on $\bbR^n$ vanishes. We assume $d$ is related to $\nabla$ through the formula
\begin{equation}\label{d-nabla}
    \abs{\nabla f(x)} = \limsup_{d(x, y) \rightarrow 0^+} \frac{\abs{f(x) - f(y)}}{d(x, y)}.
\end{equation}

\begin{definition}
    For a Borel set $A \sbs \bbR^n$, the surface measure $\mu^+(A)$ of $A$ is defined by 
    \[ \mu^+(A) = \liminf_{\epsilon \rightarrow 0^+} \frac{\mu(A_\epsilon) - \mu(A)}{\epsilon}, \quad A_\epsilon = \{y \in \bbR^n \mid d(y, A) < \epsilon\} \] 
    and the isoperimetric profile of $\mu$ by 
    \[ I_\mu(t) = \inf \{\mu^+(A) \mid A \text{ Borel and such that } \mu(A) = t\}. \]    
\end{definition}

Our measures are exponential power type of the form 
\begin{equation}\label{measure}
    d\mu = Z^{-1}e^{-N^p}d\xi
\end{equation}
where $N: \bbR^n \rightarrow \bbR$ is a sufficiently smooth function satisfying a family of estimates given in the sequel and $Z = Z_p$ normalises $\mu = \mu_p$. In the sequel, we will always interpret $Z$ as the normalisation constant for a given measure wherever it appears. 

In the sequel, all integration happens over $\bbR^n$ unless otherwise specified. We will be interested in proving the following $L^1$-analogue 
\begin{equation}\label{1spi}
    \int \abs{f}d\mu \leq \epsilon \int \abs{\nabla f}d\mu + \beta_1(\epsilon)\left(\int \abs{f}^{1/2} d\mu\right)^2, \quad \epsilon > 0  
\end{equation}
of \eqref{2spi0}, which we call the $1$-super-Poincar\'e inequality, and where $f: \bbR^n \rightarrow \bbR$ again belongs to the domain $\cD(\cE) = W^{1, 2}(\mu)$ of the Dirichlet form $\cE(f, g) = \int \nabla f \cdot \nabla g d\mu$. Our proof of the isoperimetric inequality for $\mu$ will go through the $L^1$-analogue 
\begin{equation}\label{1fsob}
    \int \abs{f}F(\abs{f})d\mu \leq c_1\int \abs{\nabla f}d\mu + c_2, \quad \int \abs{f}d\mu = 1 \Mand c_1, c_2 > 0 
\end{equation}
of \eqref{2fsob0} which we call similarly the $1$-$F$-Sobolev inequality. Lastly, we write $f \lesssim g$ for two functions $f$ and $g$ if $f \leq Cg$ for some constant $C > 0$, similarly for $f \gtrsim g$, and $f \simeq g$ for $f \lesssim g \lesssim f$. We also write $\Phi \lesssim \Psi$ for two functionals $\Phi$ and $\Psi$ if $\Phi(f) \leq C\Psi(f)$ for some constant $C > 0$ independent of $f$, similarly for $\Phi \gtrsim \Psi$, and $\Phi \simeq \Psi$ if $\Phi \lesssim \Psi \lesssim \Phi$. 

\begin{theorem}\label{thm1}
    Assume $N$ satisfies in the distributional sense 
    \begin{equation}\label{estimates}
        \frac{\abs{x}^\alpha}{N^\alpha} \lesssim \abs{\nabla N} \lesssim \frac{\abs{x}^\alpha}{N^\alpha}, \quad \Delta N \lesssim \frac{\abs{x}^{2\alpha}}{N^{2\alpha+1}}, \quad \nabla N \cdot \nabla \abs{x} \lesssim \frac{\abs{x}^{2\alpha+1}}{N^{2\alpha+1}} 
    \end{equation}
    for some $\alpha > 0$ and where $\abs{x}$ is the euclidean norm of a generic point $\xi \in \bbR^n$ written in the form $\xi = (x, x') \in \bbR^{n_1} \times \bbR^{n_2}$ with $n_1 \geq 2$. Assume moreover $\abs{x}$ satisfies again in the distributional sense 
    \begin{equation}\label{laplacian}
        \nabla \abs{x} \lesssim 1, \quad \Delta \abs{x} = \frac{n_1 - 1}{\abs{x}}, \quad \nabla \cdot x = n_1,
    \end{equation}
    and 
    \begin{equation}\label{comparison}
        \abs{x} \lesssim N, \quad d \lesssim N \lesssim d.
    \end{equation}
    If there exists a local $1$-super-Poincar\'e inequality 
    \begin{equation}\label{1spilocal}
        \int \abs{f}d\xi \leq \delta \int \abs{\nabla f}d\xi + \tilde{\beta}_1(\delta)\left(\int \abs{f}^{1/2}d\xi\right)^2, \quad \tilde{\beta}_1(\delta) \lesssim 1 + \delta^{-Q}
    \end{equation}
    for some $Q > 0$ and $f \in W^{1, 2}(\mu)$, and a local $1$-Poincar\'e inequality 
    \begin{equation}\label{1poincarelocal}
        \int_{B_R} \abs{f - \frac{1}{\xi(B_R)} \int_{B_R} fd\xi} d\xi \lesssim \int_{B_R} \abs{\nabla f}d\xi
    \end{equation}
    for $f \in W^{1,2}(B_R)$ where $B_R$ is the $d$-ball of radius $R > 0$ centred at the origin, then for each $p \geq \alpha + 1$, the measure $\mu$ defined according to \eqref{measure} satisfies the isoperimetric inequality
    \[ I_\mu \gtrsim \cU_r, \quad r = \frac{(\alpha + 1)p}{(\alpha + 1) + \alpha p} \]
    with respect to the metric $d$ and where $\cU_r(t)$ is the isoperimetric function of the measure $d\nu_r = Z^{-1}e^{-\abs{x}^r}dx$ with respect to the euclidean metric $\abs{\cdot}$ on $\bbR^n$. 
\end{theorem}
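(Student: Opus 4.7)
The plan is to prove Theorem \ref{thm1} in three stages, following the $L^1$ analogue of the Wang program relating super-Poincar\'e inequalities, $F$-Sobolev inequalities, and isoperimetry.

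First, I would establish a global $1$-super-Poincar\'e inequality \eqref{1spi} for $\mu$ with growth of exponential type $\beta_1(\epsilon) \lesssim \exp(c\,\epsilon^{-\kappa})$, where $\kappa = r/(r-1) = p(\alpha+1)/(p-\alpha-1)$. The strategy is a ``core plus tail'' decomposition: on the core $B_R$ one invokes the local inequality \eqref{1spilocal}, which after accommodation of the bounded density $e^{-N^p}$ there yields an estimate with polynomial growth in $\delta^{-Q}$. On the tail $\{N > R\}$ one exploits a Lyapunov/Hardy argument driven by \eqref{estimates}: setting $V = N^p$ and $F = \nabla V / |\nabla V|^2$, integration by parts against $e^{-V}$ gives formally
\[
\int_{\{N > R\}} |f|\, d\mu \;\lesssim\; \sup_{\{N > R\}} \frac{1}{|\nabla V|}\, \int |\nabla f|\, d\mu,
\]
with an error controlled by $\Delta V/|\nabla V|^2 \lesssim N^{-p}$ via \eqref{estimates}, which is negligible for $R$ large. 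Using $|\nabla V| = pN^{p-1}|\nabla N| \gtrsim p\,N^{p-1-\alpha}|x|^\alpha$ together with $|x| \lesssim N$ of \eqref{comparison}, one bounds $1/|\nabla V|$ on $\{N \simeq R\}$ and balances $R = R(\epsilon)$ against the local polynomial growth; the exponent $\kappa$ emerges exactly from this balance and encodes the admissibility constraint $p \geq \alpha + 1$.

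Second, I would convert the $1$-super-Poincar\'e inequality into a $1$-$F$-Sobolev inequality \eqref{1fsob} by a level-set optimisation. Applying \eqref{1spi} to layer-cake truncations of $f$ and optimising in $\epsilon$ as a function of the level produces an $F$ that essentially inverts $\beta_1$; concretely $F(t) \simeq (\log t)^{1/\kappa} = (\log t)^{1-1/r}$ for $t$ large. This is the $L^1$ analogue of Wang's equivalence between \eqref{2spi0} and \eqref{2fsob0}, and in the $L^1$ setting the argument is in fact cleaner since one may work directly through the co-area formula.

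Third, I would extract the isoperimetric inequality from the $1$-$F$-Sobolev inequality by applying it to smooth approximations of $\mathbf{1}_A$. This yields $\mu(A)\, F(1/\mu(A)) \lesssim \mu^+(A) + C\mu(A)$; absorbing the lower-order term for small $\mu(A)$ gives $\mu^+(A) \gtrsim \mu(A)(\log 1/\mu(A))^{1-1/r}$, which coincides with the known profile $\mathcal{U}_r(t) \sim t(\log 1/t)^{1-1/r}$ for small $t$, obtained by a one-dimensional computation for the radial density $e^{-|x|^r}$. The main obstacle will be the tail estimate of Step 1, precisely because $|\nabla V|$ degenerates on the submanifold $\{x = 0\}$ (e.g.\ the vertical axis in the Heisenberg group) even where $N$ is large. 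The auxiliary estimates \eqref{laplacian}, in particular the identity $\Delta|x| = (n_1 - 1)/|x|$ and $\nabla \cdot x = n_1$, valid exactly because $n_1 \geq 2$, are what enable a subsidiary Hardy inequality in the $x$-variables that controls the contribution of $f$ on the degenerate set $\{|x| \ll N\}$ and folds it into the tail bound.
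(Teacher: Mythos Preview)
Your three-stage plan (super-Poincar\'e $\Rightarrow$ $F$-Sobolev $\Rightarrow$ isoperimetry) matches the paper's, but two steps are not yet in working order.

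\textbf{The tail estimate and the exponent $\kappa$.} The sentence ``together with $|x|\lesssim N$ of \eqref{comparison}, one bounds $1/|\nabla V|$ on $\{N\simeq R\}$'' is wrong: $|x|\lesssim N$ is an \emph{upper} bound on $|x|$ and gives no control on $1/|\nabla V|\lesssim N^{\alpha+1-p}|x|^{-\alpha}$, which is in fact infinite on $\{x=0\}\cap\{N>R\}$. You later acknowledge this degeneracy, but you do not say how the exponent $\kappa$ actually arises once the naive balance fails. The paper's mechanism is: (i) a $U$-bound with the \emph{degenerate} weight $U_1=|x|^{\alpha}N^{p-\alpha-1}$, obtained by integrating $\nabla(|f|e^{-N^p})$ against $|x|^sN^t\nabla N$; (ii) a Hardy inequality with weight $|x|^{-1}$, using $\Delta|x|=(n_1-1)/|x|$; (iii) the pointwise inequality $c\,y^{\alpha}+y^{-1}\gtrsim c^{1/(\alpha+1)}$ applied with $y=|x|$ and $c=N^{p-\alpha-1}$, which merges (i) and (ii) into a \emph{nondegenerate} $U$-bound with weight $N^{(p-\alpha-1)/(\alpha+1)}$. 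Only from this last weight does the balance $R=\epsilon^{-(\alpha+1)/(p-\alpha-1)}$ produce $\kappa=p(\alpha+1)/(p-\alpha-1)$. Your sketch never reaches this merged weight, so the derivation of $\kappa$ is incomplete.

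\textbf{The defect and the Cheeger inequality.} Your Step 3 yields only $\mu^+(A)\gtrsim \mu(A)(\log 1/\mu(A))^{1-1/r}-C\mu(A)$, which suffices for $\mu(A)$ small but says nothing for $\mu(A)$ bounded away from $0$ (say in $[\epsilon_0,1/2]$). The paper removes the defect by proving a Cheeger inequality $\int|f-\int f\,d\mu|\,d\mu\lesssim\int|\nabla f|\,d\mu$, which is exactly where the local $1$-Poincar\'e hypothesis \eqref{1poincarelocal} enters (together with the nondegenerate $U$-bound, via \cite[Theorem~2.6]{inglis2011u}). You never invoke \eqref{1poincarelocal}, so this piece is missing. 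Relatedly, at the endpoint $p=\alpha+1$ one has $\kappa=\infty$, the super-Poincar\'e route collapses, and the conclusion $I_\mu\gtrsim\mathcal U_1$ is precisely the Cheeger inequality; the paper treats this case separately.
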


Our motivating examples live on a stratified Lie group (discussed in the sequel) with subgradient $\nabla$ for which the $L^1$-Sobolev inequality of \cite[Theorem~IV.7.1]{varopoulos1991analysis} readily implies \eqref{1spilocal} by H\"older's inequality and linearisation, while the local $1$-Poincar\'e inequality \eqref{1poincarelocal} follows from a generalisation of \cite[Theorem~2.1]{jerison1986poincare}. Our functions $N$ will satisfy the conditions of Theorem \ref{thm1} with $\alpha = 1$. This class of measures therefore enjoy supergaussian decay of tails but satisfy the isoperimetric inequality of the subgaussian model measure $d\nu_{2p/(p+2)}$. The isoperimetric inequality of Theorem \ref{thm1} is a priori not necessarily optimal (in the sense $I_\mu \gtrsim \cU_r$ does not preclude the possibility that $I_\mu \gtrsim \cU_{r+\epsilon}$ for some $\epsilon > 0$) but indeed the isoperimetric inequality we prove is optimal for some measures.
\section{Proof of main results}

Our proof will follow the ideas of \cite{hebisch2010coercive} of using $U$-bounds, which are inequalities of the form 
\begin{equation}\label{ubound0}
    \int \abs{f}^qUd\mu \lesssim \int \abs{\nabla f}^qd\mu + \int \abs{f}^qd\mu
\end{equation}
for some function $U: \bbR^n \rightarrow \bbR$ satisfying some growth conditions and $q \in [1, 2]$. Only the case $q = 1$ is needed to achieve the isoperimetric inequality of Theorem \ref{thm1}, but we prove nevertheless the $q$-super-Poincar\'e inequality for each $q \in [1, 2]$ to complete the picture. The method of $U$-bounds were initiated in \cite{hebisch2010coercive} and used to prove a number of functional inequalities, including the $q$-Poincar\'e and $q$-logarithmic Sobolev inequalities (improving on the case $q = 2$) introduced in \cite{bobkov2005entropy} and weighted versions thereof, and also the logarithmic${}^\theta$-Sobolev inequalities (where the entropy functional $f\log f$ on the left hand side is replaced by $f\log(1 + f)^\theta$) which are related to super-Poincar\'e inequalities through the equivalence described by \cite[Theorems~3.1~and~3.2]{wang2000functional}. For $q = 2$, such inequalities can be obtained in a number of ways; for instance, one is via an expansion of the square argument as observed in \cite[\S2.3]{roberto2022hypercontractivity}, a second is based on the Bakry-\'Emery calculus for multiplication operators studied in \cite{roberto2021bakry, qiu2024bakry}, and another is through the connection between diffusion and Schr\"odinger operators, see \cite[\S1.15.7]{bakry2014analysis}. More generally they can be obtained for each $q \in [1, 2]$ via an integration by parts argument. 

The method of $U$-bounds, as far as applications to other functional inequalities are concerned, are somewhat qualitatively similar to the method of Lyapunov functions pushed forward by the series of works \cite{bakry2008rate, cattiaux2009lyapunov, cattiaux2010functional, cattiaux2010functional2, cattiaux2010note, cattiaux2011some}. For instance, both methods rely essentially on the existence of a localised version of the desired inequality and both $U$-bounds and Lyapunov functions enter the proofs in a neighbourhood of infinity. Moreover, both yield explicit constants and are generally quite robust with respect to (suitable) unbounded perturbations of the measure. In any case, one reason for working with $U$-bounds instead of Lyapunov functions is that there is more literature for $U$-bounds in the subelliptic settings we consider, see, for instance, the series of works \cite{inglis2012spectral, bou2021coercive, dagher2022coercive, chatzakou2022poincare, chatzakou2023q, bou2024coercive} and also \cite{inglis2009logarithmic, papageorgiou2014logarithmic, papageorgiou2018log, inglis2019log, dagher2022spectral, konstantopoulos2025log} for applications in the infinite-dimensional setting.

We start by proving a $U$-bound for our measure $\mu$ with a function $U$ which does not satisfy the classical growth conditions of \cite{hebisch2010coercive} but instead degenerates along $\{\abs{x} = 0\}$ as in \cite{inglis2012spectral}. Then we prove a Hardy inequality, which can also be regarded as a $U$-bound, for $\mu$ and show that the degeneracy, for $p > \alpha + 1$ can be resolved as a result, which allows passage to \eqref{1spi} and equivalently \eqref{1fsob}. Finally, we conclude by arguments of \cite{inglis2011u} the desired isoperimetric inequality. 

\subsection{The case $p > \alpha + 1$}

\begin{lemma}\label{lem:ubound}
    Under the assumptions of Theorem \ref{thm1}, the measure $\mu$ satisfies the $U$-bound 
    \begin{equation}\label{ubound1}
        \int \abs{f}^qU_qd\mu \vcentcolon= \int \abs{f}^q\abs{x}^{q\alpha}N^{q(p-\alpha-1)}d\mu \lesssim \int \abs{\nabla f}^qd\mu + \int \abs{f}^qd\mu
    \end{equation}
    for each $q \in [1, 2]$. 
\end{lemma}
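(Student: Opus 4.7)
The proof will proceed by the integration-by-parts $U$-bound technique of Hebisch-Zegarlinski, as adapted by Inglis to the degenerate setting. My plan is to introduce the vector field
\[ V_q = |x|^{(q-2)\alpha} N^{(q-1)(p-1) - (q-2)\alpha} \nabla N, \]
whose exponents are engineered so that the two-sided bound on $|\nabla N|$ in \eqref{estimates} simultaneously yields $p N^{p-1} V_q \cdot \nabla N \simeq U_q$ and $|V_q| \simeq U_q^{(q-1)/q}$. Integrating the identity $\nabla \cdot (|f|^q V_q e^{-N^p}) = 0$ (for smooth compactly supported $f$, extending to $\cD(\cE)$ by approximation) produces
\[ \int |f|^q\, p N^{p-1} V_q \cdot \nabla N \, d\mu = \int \nabla(|f|^q) \cdot V_q \, d\mu + \int |f|^q \nabla \cdot V_q \, d\mu, \]
whose LHS is $\gtrsim \int |f|^q U_q d\mu$.

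For the gradient term, combining $|V_q| \lesssim U_q^{(q-1)/q}$ with Young's inequality applied to $q|f|^{q-1}|\nabla f|\cdot U_q^{(q-1)/q}$ at conjugate exponents $(q,\,q/(q-1))$ (or trivially in the case $q=1$) gives a bound of the form $\epsilon \int |f|^q U_q \, d\mu + C_\epsilon \int |\nabla f|^q \, d\mu$; I would fix $\epsilon$ small enough for the first piece to be absorbed back into the LHS.

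The divergence term is the real crux. Expanding $\nabla \cdot V_q$ into three contributions from $\Delta N$, from the gradient of $|x|^{(q-2)\alpha}$ paired with $\nabla N$, and from the gradient of the $N$-power paired with $|\nabla N|^2$, and estimating each piece by the corresponding line of \eqref{estimates}, yields $|\nabla \cdot V_q| \lesssim |x|^{q\alpha} N^{(q-1)(p-1) - q\alpha - 1} = U_q / N^p$. I would then split $\int |f|^q |\nabla \cdot V_q| \, d\mu$ across $\{N \geq R\}$ and $\{N \leq R\}$: on the outer region the bound $N^{-p} \leq R^{-p}$ lets me absorb into the LHS for $R$ large, while on the inner region, bounded in $d$ by \eqref{comparison}, $\mu$ is comparable to Lebesgue measure and $U_q$ is uniformly bounded by a constant depending on $R$, so local integrability survives precisely because of the vanishing of $|x|^{q\alpha}$ at the axis; the local inequalities \eqref{1poincarelocal} and \eqref{1spilocal} then convert the remaining bounded local contribution into $\int |\nabla f|^q d\mu + \int |f|^q d\mu$.

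The hardest step will be this last local bound near $\{|x|=0\}$ on $\{N \leq R\}$. The degeneracy built into $U_q$ via the $|x|^{q\alpha}$ factor is indispensable here: it exactly compensates for the singularity of $|\nabla \cdot V_q|$ along the axis, making the local contribution integrable without needing to invoke a genuine Hardy inequality (which is only available after the next step in the paper's strategy). For this reason I would not attempt to remove the $|x|^{q\alpha}$ factor from the statement at this stage.
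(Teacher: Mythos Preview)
Your argument is essentially the paper's: your vector field $V_q=\abs{x}^{(q-2)\alpha}N^{(q-1)(p-1)-(q-2)\alpha}\nabla N$ is exactly the paper's $\abs{x}^sN^t\nabla N$ with $s=\alpha(q-2)$ and $t=(q-1)(p-\alpha-1)+\alpha$ (the exponents agree after expanding), and the integration by parts, the lower bound $pN^{p-1}V_q\cdot\nabla N\gtrsim U_q$, Young's inequality on the gradient term (sidestepped when $q=1$), and the divergence estimate $\abs{\nabla\cdot V_q}\lesssim U_q/N^p$ all match line for line.

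Where you deviate is the final local step, and there you overcomplicate matters. Having arrived at
\[
\int\abs{f}^qU_q\,d\mu\lesssim\int\abs{\nabla f}^q\,d\mu+\int\abs{f}^q\,\frac{U_q}{N^p}\,d\mu,
\]
the paper simply splits the last integral: on $B_R^c$ one has $U_q/N^p\le \epsilon' U_q$ for $R$ large, absorbable into the left, while on $B_R$ the weight $U_q/N^p$ is treated as bounded (``by continuity of $U_q/N^p$''), giving $\int_{B_R}\abs{f}^q(U_q/N^p)\,d\mu\le K\int\abs{f}^q\,d\mu$ directly. The local inequalities \eqref{1spilocal} and \eqref{1poincarelocal} are neither invoked nor obviously applicable to a \emph{weighted} $L^q$ integral, and your plan does not explain how they would be. Your concern about ``the axis'' $\{\abs{x}=0\}$ is also misplaced: $N$ is a norm and vanishes only at the origin, so $U_q/N^p=\abs{x}^{q\alpha}N^{q(p-\alpha-1)-p}$ is smooth on $\{\abs{x}=0\}\setminus\{0\}$; the factor $\abs{x}^{q\alpha}$ is not there to rescue integrability along the axis but is simply the degenerate part of $U_q$ itself. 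The only point where regularity genuinely needs attention is the single point $N=0$, not a codimension-$n_1$ set.
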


\begin{proof}
    Following the arguments of \cite{hebisch2010coercive}, we develop by Leibniz rule firstly
    \begin{equation}\label{uboundproof1}
    \begin{aligned}
        \int \nabla(\abs{f}^qe^{-N^p}) \cdot \abs{x}^s N^t \nabla N d\xi &= q \int \abs{f}^{q-2} f \abs{x}^s N^t \nabla f \cdot \nabla N d\mu \\
        &- p \int \abs{f}^q \abs{x}^s N^{t+p-1} \abs{\nabla N}^2 d\mu
    \end{aligned}
    \end{equation}
    for some $s, t > 0$ to be determined later, and again by integrating by parts secondly 
    \begin{equation}\label{uboundproof2}
    \begin{aligned}
        \int \nabla(\abs{f}^qe^{-N^p}) \cdot \abs{x}^s N^t \nabla N d\xi &= -\int \abs{f}^q \nabla \cdot \abs{x}^s N^t \nabla N d\mu \\
        &= -s \int \abs{f}^q \abs{x}^{s-1}N^t \nabla N \cdot \nabla \abs{x} d\mu \\
        &\phantom{=} -t \int \abs{f}^q \abs{x}^s N^{t-1} \abs{\nabla N}^2 d\mu \\ 
        &\phantom{=} - \int \abs{f}^q\abs{x}^s N^t \Delta N d\mu. 
    \end{aligned}
    \end{equation}
    With $s = \alpha(q - 2)$ and $t = (q - 1)(p - \alpha - 1) + \alpha$ we obtain the upper bound on \eqref{uboundproof1}
    \begin{align*}
        \int \nabla(\abs{f}^qe^{-N^p}) \cdot \abs{x}^s N^t \nabla N d\xi &\lesssim \int \abs{f}^{q-2} f \abs{\nabla f} \abs{x}^{s+\alpha} N^{t-\alpha} d\mu - \int \abs{f}^q \abs{x}^{s+\alpha} N^{t+p-1+\alpha} d\mu \\
        &\lesssim \epsilon \int \abs{f}^qU_qd\mu + \frac{1}{\epsilon} \int \abs{\nabla f}^qd\mu - \int \abs{f}^qU_qd\mu 
    \end{align*}
    by using the upper and lower estimates on $\abs{\nabla N}^2$ given by \eqref{estimates} and Young's inequality provided $q > 1$, and otherwise if $q = 1$ then Young's inequality can be sidestepped since $q - 1 = s + \alpha = t - \alpha = 0$. On the other hand, the weight in each of the three integrals on the right hand side of \eqref{uboundproof2} is controlled by $\abs{x}^{s+2\alpha}N^{t-2\alpha-1} = U_q/N^p$ again by \eqref{estimates}. For $\epsilon > 0$ sufficiently small this implies
    \begin{equation}\label{criticality1}
    \begin{aligned}
        \int \abs{f}^qU_qd\mu &\lesssim \int \abs{\nabla f}^qd\mu + \int \abs{f}^qU_q/N^pd\mu \\
        &\lesssim \int \abs{\nabla f}^qd\mu + \epsilon' \int \abs{f}^qU_qd\mu + K(\epsilon') \int \abs{f}^qd\mu 
    \end{aligned}
    \end{equation}
    which gives \eqref{ubound1} for another $\epsilon' > 0$ sufficiently small. Indeed, by continuity of $U_q/N^p$, we may bound $\int_{B_R} \abs{f}^qU_q/N^pd\mu \lesssim \int \abs{f}^qd\mu$ on a $d$-ball $B_R$ of radius $R > 0$ and then bound $\int_{B_R^c} \abs{f}^qU_q/N^pd\mu \lesssim \epsilon' \int \abs{f}^qU_qd\mu$ on $B_R^c$ by taking $R$ sufficiently large. 
\end{proof}

As claimed earlier, $U_q$ vanishes along $\{\abs{x} = 0\}$ which has codimension $n_1 \geq 2$. Since $\Delta \abs{x} = \frac{n_1 - 1}{\abs{x}}$, we now show a Hardy inequality with singularity along $\{\abs{x} = 0\}$ holds. 

\begin{lemma}\label{lem:hardy}
    Under the assumptions of Theorem \ref{thm1}, the measure $\mu$ satisfies the Hardy inequality 
    \begin{equation}\label{hardy1}      
        \int \frac{\abs{f}^q}{\abs{x}^q}d\mu \lesssim \int \abs{\nabla f}^qd\mu + \int \abs{f}^qd\mu
    \end{equation}
    for each $q \in [1, 2)$, and the Caffarelli-Kohn-Nirenberg inequality
    \begin{equation}\label{uncertainty1}
        \int \abs{f}^qd\mu \lesssim \left(\int \abs{f}^q\abs{x}^{q\alpha}d\mu\right)^{1/(\alpha+1)} \left(\int \abs{\nabla f}^qd\mu + \int \abs{f}^qd\mu\right)^{\alpha/(\alpha+1)} 
    \end{equation} 
    for each $q \in [1, 2]$. 
\end{lemma}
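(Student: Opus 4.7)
The plan is to reduce the Hardy inequality \eqref{hardy1} to the Lebesgue Hardy inequality via the substitution $g = f\,e^{-N^p/q}$. Direct computation gives $\abs{g}^q = \abs{f}^q e^{-N^p}$ and
\[
\abs{\nabla g}^q \lesssim e^{-N^p}\bigl(\abs{\nabla f}^q + (p/q)^q N^{q(p-1)}\abs{\nabla N}^q \abs{f}^q\bigr),
\]
and the assumption $\abs{\nabla N} \lesssim \abs{x}^\alpha/N^\alpha$ of \eqref{estimates} identifies the weight $N^{q(p-1)}\abs{\nabla N}^q$ pointwise with precisely $U_q = \abs{x}^{q\alpha} N^{q(p-\alpha-1)}$. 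Translating between the two measures yields
\[
\int \frac{\abs{g}^q}{\abs{x}^q}\, d\xi = Z \int \frac{\abs{f}^q}{\abs{x}^q}\, d\mu, \qquad \int \abs{\nabla g}^q\, d\xi \lesssim Z\int \abs{\nabla f}^q\, d\mu + Z\int \abs{f}^q U_q\, d\mu,
\]
and Lemma \ref{lem:ubound} controls the second summand on the right by $\int \abs{\nabla f}^q d\mu + \int \abs{f}^q d\mu$. It therefore suffices to establish the Lebesgue Hardy inequality $\int \abs{g}^q/\abs{x}^q\, d\xi \lesssim \int \abs{\nabla g}^q\, d\xi$, which follows by integration by parts against $V = \nabla\abs{x}/\abs{x}^{q-1}$: the identity $\Delta\abs{x} = (n_1-1)/\abs{x}$ of \eqref{laplacian} together with $\abs{\nabla \abs{x}} \lesssim 1$ yield $\nabla \cdot V = [(n_1-1)-(q-1)\abs{\nabla\abs{x}}^2]/\abs{x}^q$, which is bounded below by $c_0/\abs{x}^q$ with $c_0 > 0$ whenever $q < n_1$, and H\"older's inequality absorbs the resulting cross term. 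Since $n_1 \geq 2$ this covers the full range $q \in [1, 2)$.

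An alternative route, closer in spirit to the proof of Lemma \ref{lem:ubound}, is to integrate by parts directly against $\mu$ with $\nabla\abs{x}$ in place of $\nabla N$: this produces a residual boundary contribution of the form $p \int \abs{f}^q N^{p-1} \nabla N \cdot \nabla\abs{x}/\abs{x}^{q-1} d\mu$ which, by the third estimate in \eqref{estimates}, is dominated by $\int \abs{f}^q \abs{x}^{2\alpha+2-q} N^{p-2\alpha-2}\, d\mu$. For $q = 1$ the comparison $\abs{x} \lesssim N$ of \eqref{comparison} immediately bounds this by a fixed multiple of $\int \abs{f} U_1 d\mu$ and hence by the $U$-bound; for $q \in (1, 2)$ the residual factor $N^{p(1-q)}$ that appears would need to be handled by the same near-versus-far splitting used at the end of the proof of Lemma \ref{lem:ubound}.

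The Caffarelli--Kohn--Nirenberg inequality \eqref{uncertainty1} then follows from \eqref{hardy1} by one application of H\"older's inequality with conjugate exponents $\alpha+1$ and $(\alpha+1)/\alpha$: writing pointwise $\abs{f}^q = (\abs{f}^q \abs{x}^{q\alpha})^{1/(\alpha+1)} (\abs{f}^q/\abs{x}^q)^{\alpha/(\alpha+1)}$ and integrating gives
\[
\int \abs{f}^q\, d\mu \leq \Bigl(\int \abs{f}^q\abs{x}^{q\alpha}\, d\mu\Bigr)^{1/(\alpha+1)} \Bigl(\int \frac{\abs{f}^q}{\abs{x}^q}\, d\mu\Bigr)^{\alpha/(\alpha+1)},
\]
and \eqref{hardy1} bounds the second factor by $\int \abs{\nabla f}^q d\mu + \int \abs{f}^q d\mu$. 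This covers $q \in [1, 2)$ directly; the endpoint $q = 2$ is recovered from the same substitution argument whenever $n_1 > 2$, while in the borderline codimension $n_1 = 2$ one would resort to a limiting argument as $q \to 2^-$ or to combining the $U$-bound with a local splitting on $\{\abs{x} < \eta\}$ versus $\{\abs{x} \geq \eta\}$ optimized in $\eta > 0$.

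The principal difficulty I anticipate is the absorption of the residual $N^{p(1-q)}$ factor in the direct integration-by-parts approach when $q > 1$: this blowup near the origin is exactly what the substitution route avoids by pushing the exponential weight into the test function and reducing the problem to the cleaner Lebesgue Hardy inequality, at the cost of requiring Lebesgue Hardy as an external input.
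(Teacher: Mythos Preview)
Your Hardy argument is correct. Both routes work: the substitution $g = f e^{-N^p/q}$ reduces cleanly to the Lebesgue Hardy inequality, and your ``alternative route'' is in fact the paper's approach (it integrates by parts directly against $h = \nabla\abs{x}^{2-q}$ with respect to $\mu$, and handles the residual weight by exactly the near-versus-far splitting you describe).

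Your derivation of the Caffarelli--Kohn--Nirenberg inequality via H\"older followed by Hardy is also correct for $q \in [1,2)$, and is indeed how the paper treats the case $q = 1$. The gap is the endpoint $q = 2$ when $n_1 = 2$. The limiting argument $q \to 2^-$ fails because your Hardy constant behaves like $1/c_0$ with $c_0 = n_1 - q \to 0$, so the CKN constant you obtain blows up. The local splitting on $\{\abs{x} < \eta\}$ versus $\{\abs{x} \geq \eta\}$ does not close either: on $\{\abs{x} < \eta\}$ you still need to convert smallness of $\abs{x}$ into a gradient term, which is precisely what the unavailable Hardy inequality would do. Since the whole point of proving \eqref{uncertainty1} at $q = 2$ is to cover the borderline $n_1 = 2$ (see the Remark following the lemma), this case cannot be left to a sketch.

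The paper bypasses Hardy entirely for $q > 1$ by integrating by parts against the vector field $h = x$ rather than $\nabla\abs{x}$, using the assumption $\nabla \cdot x = n_1$ from \eqref{laplacian}. This yields
\[
n_1 \int \abs{f}^q\, d\mu = -q\int \abs{f}^{q-2} f\, \nabla f \cdot x\, d\mu + \int \abs{f}^q\, \nabla V \cdot x\, d\mu,
\]
and after H\"older with exponent $q$ together with the $U$-bound (since $\abs{\nabla V}^q \simeq U_q$) one obtains
\[
\int \abs{f}^q\, d\mu \lesssim \Bigl(\int \abs{f}^q \abs{x}^{q/(q-1)}\, d\mu\Bigr)^{(q-1)/q} \Bigl(\int \abs{\nabla f}^q\, d\mu + \int \abs{f}^q\, d\mu\Bigr)^{1/q}.
\]
If $\alpha = 1/(q-1)$ this is already \eqref{uncertainty1}; otherwise a second integration by parts with weight $\omega = \abs{x}^{\alpha(q-1)-1}$ and one further H\"older interpolation adjust the exponent on $\abs{x}$. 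The key point is that the divergence $\nabla \cdot x = n_1$ is a fixed positive constant independent of $q$, so nothing degenerates at $q = 2$ even when $n_1 = 2$.
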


\begin{remark}
    The case $q = 2$ in \eqref{hardy1} is excluded only since it may be the case $n_1 = 2$. If $n_1 > 2$ strictly, then \eqref{hardy1} holds. In any case, we show passage to the $2$-super-Poincar\'e inequality is not obstructed by this technical issue and that \eqref{uncertainty1} suffices. 
\end{remark}

\begin{proof}
    Writing the measure $\mu$ as $d\mu = Z^{-1}e^{-V}d\xi$ for $V = N^p$, we integrate by parts
    \begin{equation}\label{ibp0}
        \int \abs{f}^q\omega \nabla \cdot h d\mu = - q\int \abs{f}^{q-2}f \omega \nabla f \cdot hd\mu + \int \abs{f}^q\omega \nabla V \cdot hd\mu - \int \abs{f}^q\nabla \omega \cdot h d\mu 
    \end{equation}
    where $h$ and $\omega$ are a sufficiently regular vector field and weight respectively. To achieve \eqref{hardy1}, take $\omega \equiv 1$ and $h = \nabla \abs{x}^{2-q}$ so that $\nabla \cdot h = (2 - q)(n_1 - q)\abs{x}^{-q}$ by \eqref{laplacian} and therefore 
    \begin{align*}
        \int \frac{\abs{f}^q}{\abs{x}^q}d\mu &\lesssim \int \abs{f}^{q-2}f \nabla f \cdot \nabla \abs{x}^{2-q} d\mu + \int \abs{f}^q \nabla N^p \cdot \nabla \abs{x}^{2-q} d\mu \\
        &\lesssim \epsilon \int \frac{\abs{f}^q}{\abs{x}^q}d\mu + \frac{1}{\epsilon} \int \abs{\nabla f}^qd\mu + \int \abs{f}^q \abs{x}^{2(\alpha+1)-q}N^{p-2(\alpha+1)} d\mu \\
        &\lesssim \epsilon \int \frac{\abs{f}^q}{\abs{x}^q}d\mu + \frac{1}{\epsilon} \int \abs{\nabla f}^qd\mu + \int \abs{f}^q \abs{x}^{q\alpha}N^{p-q(\alpha+1)} d\mu
    \end{align*}
    by \eqref{estimates} and \eqref{comparison} and Young's inequality if $q > 1$. Since $p - q(\alpha + 1) \leq q(p - \alpha - 1)$, following previous arguments we can control the final addend by $\int \abs{f}^qU_qd\mu + \int \abs{f}^qd\mu$ which by Lemma \ref{lem:ubound} yields \eqref{hardy1} by taking $\epsilon$ sufficiently small. If $q = 1$ then
    \begin{equation}\label{criticality2}
    \begin{aligned}
        \int \frac{\abs{f}}{\abs{x}}d\mu &\lesssim \int \abs{f}^{-1}f\nabla f \cdot \nabla \abs{x}d\mu + \int \abs{f}\nabla N^p \cdot \nabla \abs{x}d\mu \\
        &\lesssim \int \abs{\nabla f}d\mu + \int \abs{f}\abs{x}^{2\alpha+1}N^{p-2(\alpha+1)}d\mu \\
        &\lesssim \int \abs{\nabla f}d\mu + \int \abs{f}U_1d\mu
    \end{aligned}
    \end{equation}
    which gives \eqref{hardy1}. Note to achieve \eqref{hardy1} in the critical case $q = 2$, provided $n_1 > 2$, we take $h = \nabla \log \abs{x}$. 

    To achieve \eqref{uncertainty1} in the case $q > 1$, take $\omega \equiv 1$ and $h = x$ so that $\nabla \cdot h = x$ by \eqref{laplacian} and therefore 
    \begin{equation}\label{uncertainty2}
        \begin{aligned}
            n_1 \int \abs{f}^qd\mu &= -q \int \abs{f}^{q-2}f \nabla f \cdot xd\mu + \int \abs{f}^q \nabla V \cdot xd\mu \vphantom{\left(\int \abs{f}^q\abs{x}^{\frac{q}{q-1}}d\mu\right)^{\frac{q-1}{q}} \left(\left(\int \abs{\nabla f}^qd\mu\right)^{1/q} + \left(\int \abs{f}^q\abs{\nabla V}^qd\mu\right)^{\frac{1}{q}}\right)} \\
            &\lesssim \left(\int \abs{f}^q\abs{x}^{\frac{q}{q-1}}d\mu\right)^{\frac{q-1}{q}} \left(\left(\int \abs{\nabla f}^qd\mu\right)^{1/q} + \left(\int \abs{f}^q\abs{\nabla V}^qd\mu\right)^{\frac{1}{q}}\right) \\
            &\lesssim \left(\int \abs{f}^q\abs{x}^{\frac{q}{q-1}}d\mu\right)^{\frac{q-1}{q}} \left(\int \abs{\nabla f}^qd\mu + \int \abs{f}^qd\mu\right)^{\frac{1}{q}} \vphantom{\left(\int \abs{f}^q\abs{x}^{\frac{q}{q-1}}d\mu\right)^{\frac{q-1}{q}} \left(\left(\int \abs{\nabla f}^qd\mu\right)^{1/q} + \left(\int \abs{f}^q\abs{\nabla V}^qd\mu\right)^{\frac{1}{q}}\right)}
        \end{aligned}
    \end{equation}
    by Lemma \ref{lem:ubound} and since $\abs{\nabla V}^q \simeq U_q$ by \eqref{estimates}. Similarly, take also $\omega = \abs{x}^{\alpha(q-1)-1}$ and again $h = x$ to obtain 
    \begin{equation}\label{uncertainty3}
        C(n_1, \alpha, q) \int \frac{\abs{f}^q}{\abs{x}^{1-\alpha(q-1)}}d\mu \lesssim \left(\int \abs{f}^q\abs{x}^{q\alpha}d\mu\right)^{\frac{q-1}{q}} \left(\int \abs{\nabla f}^qd\mu + \int \abs{f}^qd\mu\right)^{\frac{1}{q}}
    \end{equation}
    where $C(n_1, \alpha, q) = n_1 - 1 + \alpha(q - 1) > 0$. If $\alpha = \frac{1}{q-1}$ then \eqref{uncertainty2} and \eqref{uncertainty3} are equal and exactly \eqref{uncertainty1}. If $\alpha > \frac{1}{q-1}$ then by H\"older's inequality with exponent $\alpha(q-1) > 1$ we find 
    \[ \int \abs{f}^q\abs{x}^{\frac{q}{q-1}}d\mu \lesssim \left(\int \abs{f}^q\abs{x}^{q\alpha}d\mu\right)^{\frac{1}{\alpha(q-1)}}\left(\int \abs{f}^qd\mu\right)^{1-\frac{1}{\alpha(q-1)}} \]
    which implies \eqref{uncertainty1} through \eqref{uncertainty2}. If $\alpha < \frac{1}{q-1}$ then H\"older's inequality again with exponent $\frac{\alpha+1}{\alpha q} > 1$ we find 
    \[ \int \abs{f}^qd\mu = \int \abs{f}^q \frac{\abs{x}^{q\alpha(1 - \alpha(q-1))/(\alpha+1)}}{\abs{x}^{q\alpha(1 - \alpha(q-1))/(\alpha+1)}}d\mu \lesssim \left(\int \frac{\abs{f}^q}{\abs{x}^{1-\alpha(q-1)}}d\mu\right)^{\frac{q\alpha}{\alpha + 1}}\left(\int \abs{f}^q\abs{x}^{q\alpha}d\mu\right)^{1- \frac{q\alpha}{\alpha + 1}} \]
    which implies \eqref{uncertainty1} through \eqref{uncertainty3}. Note the earlier assumption $q > 1$ enters through H\"older's inequality and that it holds even for $n_1 = 1$ if $\alpha > 0$ and otherwise is immediate if $\alpha = 0$. The case $q = 1$ follows from \eqref{hardy1} and H\"older's inequality. 
\end{proof}

We now show how to use this Hardy inequality and Caffarelli-Kohn-Nirenberg inequality together with the $U$-bound to attain the super-Poincar\'e inequality.

\begin{proposition}\label{prop:spi}
    Under the assumptions of Theorem \ref{thm1}, the measure $\mu$ satisfies the $q$-super-Poincar\'e inequality 
    \begin{equation}\label{qspi}
        \int \abs{f}^qd\mu \leq \epsilon \int \abs{\nabla f}^qd\mu + \beta_q(\epsilon)\left(\int \abs{f}^{q/2}d\mu\right)^2
    \end{equation}
    for each $q \in [1, 2]$ and where 
    \[ \beta_q(\epsilon) \lesssim \exp(C\epsilon^{-p(\alpha+1)/(q(p-\alpha-1))}) \]
    for some $C > 0$ depending on $q$. 
\end{proposition}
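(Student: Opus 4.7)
The plan is to combine the Caffarelli--Kohn--Nirenberg inequality \eqref{uncertainty1} with the $U$-bound \eqref{ubound1} to reduce $\int \abs{f}^q d\mu$ to a local integral over $\{N \leq R\}$, then transfer this to the Lebesgue measure $\xi$ (paying an exponential factor from the density) and apply the local super-Poincar\'e inequality \eqref{1spilocal}. Balancing the CKN splitting fixes $R \simeq \epsilon^{-(\alpha+1)/(q(p-\alpha-1))}$, which in turn forces $\beta_q(\epsilon) \simeq \exp(CR^p) = \exp(C\epsilon^{-p(\alpha+1)/(q(p-\alpha-1))})$.

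Writing $A = \int \abs{f}^q \abs{x}^{q\alpha} d\mu$ and $B = \int \abs{\nabla f}^q d\mu + \int \abs{f}^q d\mu$, CKN reads $\int \abs{f}^q d\mu \lesssim A^{1/(\alpha+1)} B^{\alpha/(\alpha+1)}$. Splitting $A$ according to $\{N \leq R\}$ versus $\{N > R\}$: on the former $\abs{x}^{q\alpha} \lesssim R^{q\alpha}$ by \eqref{comparison}, while on the latter $\abs{x}^{q\alpha} \leq R^{-q(p-\alpha-1)} U_q$, so the $U$-bound delivers
\[ A \lesssim R^{q\alpha} \int_{\{N \leq R\}} \abs{f}^q d\mu + R^{-q(p-\alpha-1)} B. \]
Subadditivity of $s \mapsto s^{1/(\alpha+1)}$ together with Young's inequality in the form $X^{1/(\alpha+1)} Y^{\alpha/(\alpha+1)} \lesssim \eta Y + \eta^{-\alpha} X$ applied to the near piece then gives
\[ \int \abs{f}^q d\mu \lesssim \bigl(\eta + R^{-q(p-\alpha-1)/(\alpha+1)}\bigr) B + \eta^{-\alpha} R^{q\alpha} \int_{\{N \leq R\}} \abs{f}^q d\mu. \]
Choosing $\eta \simeq \epsilon \simeq R^{-q(p-\alpha-1)/(\alpha+1)}$, which pins $R \simeq \epsilon^{-(\alpha+1)/(q(p-\alpha-1))}$, and absorbing the $\int \abs{f}^q d\mu$ contribution in $B$ for $\epsilon$ small produces
\[ \int \abs{f}^q d\mu \lesssim \epsilon \int \abs{\nabla f}^q d\mu + \epsilon^{-\alpha} R^{q\alpha} \int_{\{N \leq R\}} \abs{f}^q d\mu. \]

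For the residual integral, \eqref{comparison} gives $\{N \leq R\} \subset B_{C_0 R}$, and on $B_{2C_0 R}$ the density satisfies the pointwise bound $d\xi \leq Z e^{C_1 R^p} d\mu$. Using a smooth cutoff $\chi_R$ equal to $1$ on $B_{C_0 R}$, supported in $B_{2C_0 R}$, with $\abs{\nabla \chi_R} \lesssim R^{-1}$, applying the local $q$-super-Poincar\'e inequality (which extends from \eqref{1spilocal} to each $q \in [1,2]$ via the same $L^q$-Sobolev $+$ H\"older $+$ linearisation argument mentioned for $q = 1$ in the introduction) to $f\chi_R$, and converting back to $d\mu$ yields
\[ \int_{\{N \leq R\}} \abs{f}^q d\mu \lesssim \delta e^{C_1 R^p} \int \abs{\nabla f}^q d\mu + \delta R^{-q} e^{C_1 R^p} \int \abs{f}^q d\mu + \tilde{\beta}_q(\delta) e^{2C_1 R^p} \biggl(\int \abs{f}^{q/2} d\mu\biggr)^2. \]
The choice $\delta \simeq \epsilon^{\alpha+1} R^{-q\alpha} e^{-C_1 R^p}$ makes the coefficient of $\int \abs{\nabla f}^q d\mu$ in the resulting bound on $\int \abs{f}^q d\mu$ of order $\epsilon$, and that of $\int \abs{f}^q d\mu$ of order $\epsilon R^{-q} \leq \epsilon$ (absorbable). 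Since $\tilde{\beta}_q(\delta) \lesssim 1 + \delta^{-Q_q}$ is at most polynomial in $\delta^{-1}$, the coefficient of $\bigl(\int \abs{f}^{q/2} d\mu\bigr)^2$ is bounded by
\[ \beta_q(\epsilon) \lesssim \epsilon^{-\alpha} R^{q\alpha} \tilde{\beta}_q(\delta) e^{2C_1 R^p} \lesssim \exp(CR^p) = \exp\bigl(C\epsilon^{-p(\alpha+1)/(q(p-\alpha-1))}\bigr), \]
all polynomial-in-$\epsilon$ prefactors being absorbed into the exponential for $\epsilon$ small.

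The main obstacle is the tight parameter selection: the density factor $e^{C_1 R^p}$ appearing in the $\abs{\nabla f}^q$ coefficient must be cancelled by choosing $\delta$ exponentially small, and the resulting $\tilde\beta_q(\delta) \sim e^{Q_q C_1 R^p}$ combined with the $e^{2C_1 R^p}$ conversion factor must combine to give precisely the exponent $p(\alpha+1)/(q(p-\alpha-1))$. The hypothesis $p > \alpha + 1$ is essential to guarantee $R$ is a genuine negative power of $\epsilon$, so that the exponential dominates the polynomial prefactors.
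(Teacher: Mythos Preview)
Your proof is correct and uses the same three ingredients as the paper (the CKN inequality \eqref{uncertainty1}, the $U$-bound \eqref{ubound1}, and the local super-Poincar\'e \eqref{1spilocal}), but packaged differently. The paper splits $\int\abs{f}^q\,d\mu$ directly over $B_R$ and $B_R^c$: on the complement it applies CKN (or, for $q<2$, the Hardy inequality merged with the $U$-bound into \eqref{uboundmerged}) to extract the decay $R^{-\gamma}$; on the ball it applies the local $q$-super-Poincar\'e to the function $\abs{f}e^{-V/q}$ rather than to $f\chi_R$, so that the density is folded into the test function and only a \emph{single} factor $\sup_{B_R}e^{V}$ appears when converting $(\int\abs{f}^{q/2}d\mu)^2$, the resulting error term $\int\abs{f}^q\abs{\nabla V}^q\,d\mu\simeq\int\abs{f}^qU_q\,d\mu$ being absorbed by the $U$-bound. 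Your route instead applies CKN globally first, splits the weighted integral $A$, and handles the near set by a naive density transfer plus cutoff, which forces the auxiliary parameter $\delta$ to be exponentially small in $\epsilon$; this costs extra factors of $e^{C_1R^p}$ in $\beta_q$, but since $\tilde\beta_q$ is only polynomial in $\delta^{-1}$ the final exponent on $\epsilon$ is unaffected. The paper's packaging is a little cleaner (it keeps the same $\epsilon$ throughout and avoids the exponentially small $\delta$), while yours has the minor advantage of treating all $q\in[1,2]$ uniformly through CKN without invoking the Hardy route separately.
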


\begin{remark}
    This $q$-super-Poincar\'e inequality is ``correct'' in the sense it is known by \cite[Lemma~4.11]{inglis2012spectral} that the inequality for $q < q'$ implies the inequality for $q'$ with a new constant depending $C$ and exponent in $\epsilon$ multiplied by $q/q'$. We do not claim at this stage however that the the growth $\beta_q$ is optimal.
\end{remark}

\begin{proof}
    If $q \in (1, 2]$ then replacing $f$ with $\abs{f}^q$ in \eqref{1spilocal} gives a local $q$-super-Poincar\'e inequality for the Lebesgue measure $d\xi$ with the same growth $\smash{\tilde{\beta}_q \lesssim \tilde{\beta}_1 \lesssim 1 + \delta^{-Q}}$ which, due to having only polynomial growth in $\delta^{-1}$, will not enter in the final asymptotic. (That being said, our examples are homogeneous spaces where $\smash{\tilde{\beta}_q} \lesssim 1 + \delta^{-Q/q}$.) The proof follows a standard decomposition of the space into a $d$-ball $B_R$ and its complement $B_R^c$. 
    
    Writing the measure $\mu$ as $d\mu = Z^{-1}e^{-V}d\xi$ for $V = N^p$ once again, on $B_R$, the measure is a log-bounded perturbation of Lebesgue measure and applying the $q$-super-Poincar\'e inequality to $\abs{f}e^{-V/q}$ yields a local $q$-super-Poincar\'e inequality for $\mu$, namely 
    \begin{align*}
        \int_{B_R} \abs{f}^qd\mu &\lesssim \int_{B_R} \abs{f}^qe^{-V}d\xi \vphantom{\epsilon \int \abs{\nabla f}^qd\mu + \epsilon \int_{B_R} \abs{f}^q\abs{\nabla V}^qd\mu + \tilde{\beta}_1(\epsilon) \sup_{B_R} e^V \left(\int_{B_R} \abs{f}^{q/2}d\mu\right)^2} \\
        &\leq \epsilon \int_{B_R} \abs{\nabla \abs{f}e^{-V/q}}^qd\xi + \beta_1(\epsilon)\left(\int_{B_R} \abs{\abs{f}e^{-V/q}}^{q/2}d\xi\right)^2 \vphantom{\epsilon \int \abs{\nabla f}^qd\mu + \epsilon \int_{B_R} \abs{f}^q\abs{\nabla V}^qd\mu + \tilde{\beta}_1(\epsilon) \sup_{B_R} e^V \left(\int_{B_R} \abs{f}^{q/2}d\mu\right)^2} \\
        &\lesssim \epsilon \int \abs{\nabla f}^qd\mu + \epsilon \int_{B_R} \abs{f}^q\abs{\nabla V}^qd\mu + \tilde{\beta}_1(\epsilon) \sup_{B_R} e^V \left(\int_{B_R} \abs{f}^{q/2}d\mu\right)^2 \\
        &\lesssim \epsilon \int \abs{\nabla f}^q + \tilde{\beta}_1(\epsilon) \sup_{B_R} e^{N^p} \left(\int \abs{f}^{q/2}d\mu\right)^2 \vphantom{\epsilon \int \abs{\nabla f}^qd\mu + \epsilon \int_{B_R} \abs{f}^q\abs{\nabla V}^qd\mu + \tilde{\beta}_1(\epsilon) \sup_{B_R} e^V \left(\int_{B_R} \abs{f}^{q/2}d\mu\right)^2} \\
        &\lesssim \epsilon \int \abs{\nabla f}^q + \tilde{\beta}_1(\epsilon) \exp(C R^p) \left(\int \abs{f}^{q/2}d\mu\right)^2 \vphantom{\epsilon \int \abs{\nabla f}^qd\mu + \epsilon \int_{B_R} \abs{f}^q\abs{\nabla V}^qd\mu + \tilde{\beta}_1(\epsilon) \sup_{B_R} e^V \left(\int_{B_R} \abs{f}^{q/2}d\mu\right)^2}
    \end{align*}
    for some $C > 0$ where at the final step we applied \eqref{comparison} and Lemma \ref{lem:ubound} since $\abs{\nabla V}^q \simeq U_q$. 
    
    On $B_R^c$ and for $q \in [1, 2)$, using the elementary lower bound $cx + x^{-s} \gtrsim c^{s/(s+1)}$ for $x, c, s > 0$, we have by Lemma \ref{lem:ubound} and the first part of Lemma \ref{lem:hardy}  
    \begin{equation}\label{uboundmerged}
        \int \abs{f}^q N^{q(p-\alpha-1)/(\alpha + 1)}d\mu \lesssim \int \abs{\nabla f}^qd\mu + \int \abs{f}^qd\mu
    \end{equation}
    and hence
    \begin{equation}\label{complement}
        \int_{B_R^c} \abs{f}^qd\mu \lesssim \frac{1}{R^\gamma} \int_{B_R^c} \abs{f}^qN^\gamma d\mu \lesssim \frac{1}{R^\gamma}\left(\int \abs{\nabla f}^q + \int \abs{f}^qd\mu\right)
    \end{equation}
    where $\gamma = q(p-\alpha-1)/(\alpha+1)$. Summing and taking $R = \epsilon^{-1/\gamma}$ we have \eqref{qspi} with the expected growth $\beta_q(\epsilon) = \smash{\tilde{\beta}}_1(\epsilon)\exp(C\epsilon^{-p/\gamma}) \lesssim \exp(C\epsilon^{-p(\alpha+1)/(q(p-\alpha-1))})$. 
    
    For $q \in [1, 2]$, in particular for $q = 2$, by the second part of Lemma \ref{lem:hardy} 
    \begin{align*}
        \int_{B_R^c} \abs{f}^qd\mu &\lesssim \left(\int_{B_R^c} \abs{f}^{q}\abs{x}^{q\alpha}d\mu\right)^{1/(\alpha+1)}\left(\int_{B_R^c} \abs{\nabla f}^qd\mu + \int_{B_R^c} \abs{f}^qd\mu\right)^{\alpha/(\alpha+1)} \vphantom{\left(\frac{1}{R^{\gamma(\alpha+1)}}\int_{B_R^c} \abs{f}^qU_qd\mu\right)^{1/(\alpha+1)}\left(\int \abs{\nabla f}^qd\mu + \int \abs{f}^qd\mu\right)^{\alpha/(\alpha+1)}} \\
        &\lesssim \left(\frac{1}{R^{\gamma(\alpha+1)}}\int_{B_R^c} \abs{f}^qU_qd\mu\right)^{1/(\alpha+1)}\left(\int \abs{\nabla f}^qd\mu + \int \abs{f}^qd\mu\right)^{\alpha/(\alpha+1)} \\
        &\lesssim \frac{1}{R^\gamma}\left(\int \abs{\nabla f}^q + \int \abs{f}^qd\mu\right) \vphantom{\left(\frac{1}{R^{\gamma(\alpha+1)}}\int_{B_R^c} \abs{f}^qU_qd\mu\right)^{1/(\alpha+1)}\left(\int \abs{\nabla f}^qd\mu + \int \abs{f}^qd\mu\right)^{\alpha/(\alpha+1)}}
    \end{align*}
    bringing us back to \eqref{complement}. The formality of the decomposition of the space $\bbG = B_R \sqcup B_R^c$ can be made rigorous by using a cutoff belonging to $W^{1, 2}(\mu)$, see, for instance, \cite{wang2000functional, cattiaux2009lyapunov, hebisch2010coercive}
\end{proof}

Finally, to prove the desired isoperimetric inequality, we specialise to the case $q = 1$. 

\begin{lemma}
    The $1$-super-Poincar\'e inequality \eqref{1spi} with growth 
    \[ \beta_1(\epsilon) \lesssim \exp(C\epsilon^{-1/\delta}) \quad \delta, C > 0 \] 
    implies the $1$-$F$-Sobolev inequality \eqref{1fsob} with 
    \[ F_1(x) = \log(1 + x)^\delta. \]
\end{lemma}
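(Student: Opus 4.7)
The plan is to pass through an intermediate isoperimetric inequality. Specifically, I would first derive from \eqref{1spi} the isoperimetric bound $\mu^+(A) \gtrsim \mu(A) \log(1/\mu(A))^\delta$ on Borel sets $A$ of small measure, and then combine this with the coarea formula and Markov's inequality applied to the level sets of $\abs{f}$ to conclude \eqref{1fsob}. This two-step structure is forced because any attempt to bound $\int \abs{f} F(\abs{f}) d\mu$ by integrating the level-set estimate $\int (\abs{f} - s)_+ d\mu \lesssim \log(s)^{-\delta} \int \abs{\nabla f} d\mu$ (which one gets directly from \eqref{1spi} by Cauchy-Schwarz on $\int (\abs{f}-s)_+^{1/2}d\mu$) fails by a $\log$-factor divergence; the isoperimetric estimate is strictly sharper because it replaces $\log s$ by $\log(1/m(s))$, and it is exactly this refinement that cancels the divergence.

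To produce the isoperimetric inequality, I approximate the indicator $\mathbf{1}_A$ of a Borel set $A$ with $m := \mu(A) \leq 1/4$ by a smooth $\phi_\eta \in \cD(\cE)$ and apply \eqref{1spi}. Passing to the limit as $\eta \to 0$ using $\int \phi_\eta d\mu \to m$, $\int \phi_\eta^{1/2} d\mu \to m$, and $\int \abs{\nabla \phi_\eta} d\mu \to \mu^+(A)$, one gets $m \leq \epsilon \mu^+(A) + \beta_1(\epsilon) m^2$. The growth $\beta_1(\epsilon) \lesssim \exp(C\epsilon^{-1/\delta})$ permits choosing $\epsilon \simeq \log(1/m)^{-\delta}$ so that $\beta_1(\epsilon) m \leq \tfrac{1}{2}$, and rearranging yields $\mu^+(A) \gtrsim m \log(1/m)^\delta$.

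For the second step, let $f \geq 0$ with $\int f d\mu = 1$ and set $m(t) := \mu(\{f > t\})$. Markov gives $m(t) \leq 1/t$, hence $\log(1/m(t)) \geq \log t$ for $t \geq 1$ and $m(t) \leq 1/4$ for $t \geq 4$. With $G(x) = xF(x)$ satisfying $G'(t) \simeq \log(1+t)^\delta$ for $t \geq 1$ and bounded on $[0,1]$, the layer-cake identity gives
\[ \int f F(f) d\mu = \int_0^\infty G'(t) m(t) dt \lesssim C + \int_1^\infty (\log t)^\delta m(t) dt. \]
Applying the isoperimetric inequality of the previous step at each level $t \geq 4$ and using the coarea formula,
\[ \int \abs{\nabla f}d\mu = \int_0^\infty \mu^+(\{f > t\}) dt \gtrsim \int_4^\infty m(t) \log(1/m(t))^\delta dt, \]
combined with $(\log t)^\delta \leq \log(1/m(t))^\delta$ on $[4,\infty)$, controls the right-hand side of the layer-cake expression by $\int \abs{\nabla f}d\mu + C'$, which is \eqref{1fsob}.

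The main obstacle is the technical justification in the subelliptic setting of the two ingredients linking analytic and geometric statements: the approximation of indicators by $W^{1,2}(\mu)$-functions so that $\int \abs{\nabla \phi_\eta} d\mu \to \mu^+(A)$, and the coarea formula $\int \abs{\nabla f}d\mu = \int_0^\infty \mu^+(\{f>t\}) dt$. Both rest on the identification \eqref{d-nabla} of $\abs{\nabla f}$ as the metric slope with respect to $d$ and are standard, but require care.
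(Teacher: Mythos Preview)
Your argument is essentially correct but takes a genuinely different route from the paper. The paper adapts Wang's proof of \cite[Theorem~3.2]{wang2000functional} directly to the $L^1$ setting: it applies \eqref{1spi} not to approximate indicators but to the double truncations $f_n = (f - \delta^n)_+ \wedge (\delta^{n+1} - \delta^n)$ for a geometric sequence of levels, uses that the gradients $\nabla f_n$ are supported on disjoint level bands so that $\sum_n \int \abs{\nabla f_n}\,d\mu \leq \int \abs{\nabla f}\,d\mu$, and then sums the resulting estimates with an $n$-dependent choice of $\epsilon$. This is a purely functional argument that never leaves $W^{1,2}(\mu)$ and needs neither the approximation of Borel indicators nor any form of the coarea formula. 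Your detour through the intermediate isoperimetric bound $\mu^+(A)\gtrsim \mu(A)\log(1/\mu(A))^\delta$ is more transparent geometrically and in fact already produces the small-set half of the isoperimetric profile that the paper is ultimately after, but it carries exactly the technical burden you flag at the end: the coarea identity with the Minkowski content $\mu^+$ (as opposed to BV-perimeter) is delicate in the subelliptic setting, and the passage from \eqref{1spi} to sets requires the indicator approximation to land in $\cD(\cE)$. The paper's route sidesteps both issues. There is also a mild structural irony: the paper's overall scheme is $1$-SPI $\Rightarrow$ $1$-$F$-Sobolev $\Rightarrow$ isoperimetry (the last step via \cite[Theorem~4.5]{inglis2011u}), whereas your proof of this lemma inverts the order of the last two implications.
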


\begin{proof}
    The proof follows \cite[Theorem~3.2]{wang2000functional} except we replace 
    \begin{enumerate}
        \item $\mu(f^2) = 1$ with $\mu(f) = 1$, 
        \item $A_n = \{\delta^{n+1} > f^2 \geq \delta^n\}$ with $A_n = \{\delta^{n+1} > f \geq \delta^n\}$, and 
        \item $f_n = (f - \delta^{n/2}) \wedge (\delta^{(n+1)/2} - \delta^{n/2})$ with $f_n = (f - \delta^n) \wedge (\delta^{n+1} - \delta^n)$. 
    \end{enumerate}
    The proof follows in exactly the same way up until the second lower bound for $\mu(\abs{\nabla f}^2)$ where our analogue is 
    \[ \mu(\abs{\nabla f}) \geq \sum_{n=0}^\infty \eta(\delta^n)\mu(f \geq \delta^{n+1})(\delta^{n+1} - \delta^n) \]
    with $\eta$ the function $\xi$ defined in the statement of \cite[Theorem~3.2]{wang2000functional}, meaning we obtain the exact same function $F$ as in the case of the $2$-super-Poincar\'e inequality with growth $\beta_2(\epsilon) \lesssim \exp(C\epsilon^{-1/\delta})$ but with different constants, namely $c_1$ is no longer $(\delta^{(n+1)/2} - \delta^{n/2})^2/(\delta^n - \delta^{n-1})$ but instead 
    \[ c_1 = \frac{\delta^{n+1}- \delta^n}{\delta^n - \delta^{n-1}} = \delta. \qedhere \]
\end{proof}

\begin{proof}[Proof of Theorem \ref{thm1} for $p > \alpha + 1$]
    The $F$-Sobolev inequality implies by renormalisation a defective $1$-logarithmic${}^\theta$ Sobolev inequality of the form 
    \[ \int \abs{f}\log\left(1 + \frac{\abs{f}}{\int \abs{f}d\mu}\right)^\theta d\mu \lesssim \int \abs{\nabla f}d\mu + \int \abs{f}d\mu, \quad \theta = \frac{p - \alpha - 1}{p(\alpha + 1)} \] 
    which in the language of \cite{inglis2011u} is a defective $L^1\Phi$-entropy inequality for $\Phi(x) = x\log(1 = x)^\theta$. To satisfy the conditions of part (ii) of \cite[Theorem~4.5]{inglis2011u} we also need the Cheeger inequality 
    \[ \int \abs{f - \int fd\mu}d\mu \lesssim \int \abs{\nabla f}d\mu \]
    which follows by \cite[Theorem~2.6]{inglis2011u}, \eqref{1poincarelocal}, and the nondegenerate $U$-bound \eqref{uboundmerged}. The isoperimetric inequality follows from part (iii) of \cite[Theorem~4.5]{inglis2011u} with $q = 1/\theta$ and since $\cU_q$ defined in that paper is actually the isoperimetric profile of $d\nu_{q/(q-1)}$ in our notation. 
\end{proof}

\subsection{The case $p = \alpha + 1$}

We now prove the endpoint case. Since 
\[ \lim_{p \rightarrow \alpha + 1} \frac{p(\alpha + 1)}{p\alpha + (\alpha + 1)} = 1, \] 
we may expect $\mu$ for $p = \alpha + 1$ satisfies the linear isoperimetric inequality $I_\mu \gtrsim \cU_1$ of the double-sided exponential measure $\nu_1$, that is equivalently a Cheeger inequality; this would also parallel the euclidean setting where $\nu_r$ satisfies linear isoperimetry and the Cheeger inequality for $r = 1$ and superlinear isoperimetry and the super-Poincar\'e inequality for $r > 1$. However, the methods presented thus far apparently fail since the exponent on $\epsilon$ in the super-Poincar\'e inequality \eqref{qspi} blows up in the limit. Moreover, the Cheeger inequality of \cite[Theorem~2.6]{inglis2011u} used in the proof of Theorem \ref{thm1} again apparently fails since it requires the existence of a nondegenerate $U$-bound that diverges at infinity such as \eqref{uboundmerged} which is not provided in the critical case. (In fact, our arguments show the existence of such a $U$-bound implies superlinear isoperimetry.) 

Nevertheless, one can adapt the methods presented here to achieve a Cheeger inequality. Indeed, by \cite[Equation~2.24]{inglis2011u}, given the local $1$-Poincar\'e inequality \eqref{1poincarelocal} it is possible to prove a local Cheeger inequality (on a ball $B_R$) and this reduces the problem to showing
\begin{equation}\label{cheegerinfinity} 
    \int_{B_R^c} \abs{f - m} d\mu \lesssim \int \abs{\nabla f}d\mu
\end{equation}
for all $m \in \bbR$ holds, since $\int \abs{f - \int fd\mu}d\mu \leq 2\int \abs{f - m}d\mu$ and a choice of $m$ is made in the proof of \cite[Equation~2.24]{inglis2011u}.

To this end, we take $q = 1$ and return to \eqref{criticality1} 
\begin{equation}\label{criticality3}
    \int \abs{f}\abs{x}^\alpha d\mu = \int \abs{f}U_1d\mu \lesssim \int \abs{\nabla f}d\mu + \int \abs{f}U_1/N^pd\mu. 
\end{equation}
in the proof of Lemma \ref{lem:ubound} and also to \eqref{criticality2}
\begin{equation}\label{criticality4}
    \int \frac{\abs{f}}{\abs{x}}d\mu \lesssim \int \abs{\nabla f}d\mu + \int \abs{f}U_1d\mu.
\end{equation}
in the proof of Lemma \ref{lem:hardy}. 

Since we are only interested in estimating $\int_{B_R^c} \abs{f - m}d\mu$ in a neighbourhood of infinity, for $R$ sufficiently large we may absorb $U_1/N^p$ into the left hand side of \eqref{criticality3} which together with \eqref{criticality4} implies
\begin{equation}\label{criticality5}
    \int_{B_R^c} \abs{f}d\mu \lesssim \int_{B_R^c} \frac{\abs{f}}{\abs{x}}d\mu + \int_{B_R^c} \abs{f}\abs{x}^\alpha d\mu \lesssim \int_{B_R^c} \abs{\nabla f}d\mu. 
\end{equation}
Replacing $f$ by $f - m$ gives \eqref{cheegerinfinity} and thus $\mu$ at $p = \alpha + 1$ satisfies $I_\mu \gtrsim \cU_1$. The formality of the decomposition can be made rigorous by another approximation argument, see, for instance, \cite[pp.~4]{bakry2008rate}.
\section{Examples}

\subsection{Stratified Lie groups of step two}\label{S3.1}

A stratified Lie group $\bbG$ is a Lie group on $\bbR^n$ equipped with group law $\circ: \bbR^n \times \bbR^n \rightarrow \bbR^n$ and whose Lie algebra $\mf{g}$ of left invariant vector fields admits the decomposition $\mf{g} = \bigoplus_{i=0}^{\mf{r}-1} \mf{g}_i$ where the $\mf{g}_i$ are linear subspaces of $\mf{g}$ such that $\mf{g}_i = [\mf{g}_0, \mf{g}_{i-1}]$ for $1 \leq i \leq \mf{r}-1$ and $\mf{r} \in \bbZ_{\geq 1}$ the step of the group. We always work in a set of (exponential) coordinates on $\bbG$ in which we may write $\bbG \cong \otimes_{i=0}^{r-1} \bbR^{n_i}$ where $n_i = \dim(\mf{g}_i)$ and Lebesgue measure is the unique left invariant Haar measure up to constants. There is a canonical basis $\{X_1, \cdots, X_\ell\}$ for $\mf{g}_0$ whose components form the subgradient $\nabla_\bbG = (X_1, \cdots, X_\ell)$ and the sublaplacian $\Delta_\bbG = \nabla_\bbG \cdot \nabla_\bbG = \sum_{i=1}^\ell X_i^2$, analogising the euclidean gradient and laplacian respectively. The subgradient induces a natural metric on $\bbG$, called the Carnot-Carath\'eodory distance $d = d_\bbG$, while the sublaplacian induces on groups of homogeneous dimension $Q = Q(\bbG) = \sum_{i=0}^{r-1} (i+1)\dim(\mf{g}_i) \geq 3$ a homogeneous norm, called the Kor\'anyi-Folland gauge $N$, through the distributional identity $\Delta_\bbG N^{2-Q} = \delta_0$, that is $N^{2-Q}$ is the fundamental solution of $\Delta_\bbG$. 

When $\bbG$ has step $\mf{r} = 2$, it is known by \cite[Theorem~3.2.2]{bonfiglioli2007stratified} that the group law can be characterised by the group law
\begin{equation}\label{grouplawstep2}
    (x, t) \circ (\xi, \tau) = \left(x + \xi, t_1 + \tau_1 + \frac{1}{2} \inner{B^{(1)}x, \xi}, \cdots, t_n + \tau_n + \frac{1}{2} \inner{B^{(n)}x, \xi}\right )
\end{equation}
on $\bbR^n_x \times \bbR^m_t$ where $B^{(1)}, \cdots, B^{(m)}$ is a family of $n \times n$ linearly independent skew-symmetric matrices. We say that $\bbG$ is a $H$-type group if the $B^{(j)}$ are all orthogonal and anticommute. This class is special since the Kor\'anyi-Folland gauge associated to a $H$-type group is given by the explicit formula
\begin{equation}\label{Nkappa}  
    N_\kappa(x, t) = (\abs{x}^4 + \kappa \abs{t}^2)^{1/4}, \quad \kappa > 0, 
\end{equation}
for $\kappa = 16$ and is called the Kaplan norm after \cite[Theorem~2]{kaplan1980fundamental}. The class of $H$-type groups contains the Heisenberg group $\bbH^1$ together with its higher order generalisations, and is itself generalised by the class of M\'etivier groups whose $B^{(j)}$ satisfy the weaker property that every nonzero linear combination of the $B^{(j)}$ is nonsingular. Note by \cite[Remark~3.6.3]{bonfiglioli2007stratified} that the first layer $\bbR^n_x$ of a M\'etivier group has even dimension $n \geq 2$. For more details on stratified Lie groups (of step two) we refer the reader to \cite[\S1,~\S3,~\S18]{bonfiglioli2007stratified}. 

We now verify the conditions of Theorem \ref{thm1}. On a $H$-type group $\bbG \cong \bbR^n_x \times \bbR^m_t$, the estimates \eqref{estimates} on $N$ are exact by \cite[Proposition~2.7~and~pp.~20]{inglis2012spectral} and hold with $\alpha = 1$ and $\abs{x}$ the euclidean norm on $\bbR^n_x$ which has (even) dimension $n \geq 2$. When $\bbG$ is a general step-two group the first layer has again dimension $n \geq 2$ otherwise $\bbG$ is trivial, and functions of the form \eqref{Nkappa} were proven in \cite[Lemma~2]{bou2021coercive} to satisfy \eqref{estimates} with $\alpha = 1$. The subgradient and sublaplacian coincide with their euclidean counterparts when acting on functions solely of $x \in \bbR^n_x$, and therefore \eqref{laplacian} is automatic. The first part of \eqref{comparison} follows by definition of $N_\kappa$ and from \cite[Proposition~5.1.4]{bonfiglioli2007stratified}. Finally, as mentioned earlier, the $L^1$-Sobolev inequality \eqref{1spilocal} with $Q = Q(\bbG)$ and the local $1$-Poincar\'e inequality \eqref{1poincarelocal} follow from \cite[Theorem~IV.7.1]{varopoulos1991analysis} and \cite[Theorem~2.1]{jerison1986poincare} respectively. 

\begin{corollary}
    Let $\bbG \cong \bbR^n_x \times \bbR^m_t$ be a step-two stratified Lie group equipped with its Carnot-Carath\'eodory distance $d$ and let $N$ be given by $N_\kappa(x, t) = (\abs{x}^4 + \kappa \abs{t}^2)^{1/4}$, $\kappa > 0$. Then the measure $d\mu = Z^{-1}e^{-N^p}d\xi$, $p \geq 2$, satisfies the isoperimetric inequality 
    \[ I_\mu \gtrsim \cU_{2p/(p+2)}. \]
\end{corollary}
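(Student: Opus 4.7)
The plan is simply to verify that every hypothesis of Theorem \ref{thm1} holds in this setting with $\alpha = 1$, and then to read off the isoperimetric exponent $r = (\alpha+1)p/((\alpha+1) + \alpha p) = 2p/(p+2)$. Since the paragraph preceding the corollary already sketches what needs checking, my proof would amount to assembling these facts cleanly and invoking Theorem \ref{thm1} together with its endpoint analogue at $p = \alpha + 1 = 2$.

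Concretely, I would proceed in four short steps. First, for the gauge estimates \eqref{estimates} with $\alpha = 1$: when $\bbG$ is of $H$-type, these are the sharp identities of \cite{inglis2012spectral}; for a general step-two group with $N = N_\kappa$ one does not have the $H$-type algebra but the explicit polynomial form of $N_\kappa$ still yields \eqref{estimates} with $\alpha = 1$, as computed in \cite{bou2021coercive}. Second, the conditions \eqref{laplacian} on $\abs{x}$ reduce to euclidean identities because each $X_i \in \mf{g}_0$ acts on functions depending only on the first-layer coordinate $x \in \bbR^n_x$ as the corresponding euclidean partial derivative $\pa_{x_i}$; since the first layer of any nontrivial step-two group has dimension $n \geq 2$, the hypothesis $n_1 \geq 2$ is automatic. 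Third, the comparison \eqref{comparison} splits into the trivial inequality $\abs{x} \leq N_\kappa$ by definition of $N_\kappa$, and $d \simeq N$, which holds since the Carnot-Carathéodory distance is equivalent to any homogeneous norm on a homogeneous group, see \cite[Proposition~5.1.4]{bonfiglioli2007stratified}.

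Fourth, I would address the two local inequalities. The local $1$-super-Poincaré inequality \eqref{1spilocal} follows from the $L^1$-Sobolev inequality of Varopoulos \cite[Theorem~IV.7.1]{varopoulos1991analysis} applied on the homogeneous space $\bbG$ (of homogeneous dimension $Q(\bbG)$), combined with H\"older's inequality and linearisation exactly as in the remark following Theorem \ref{thm1}; this produces a growth $\tilde\beta_1(\delta) \lesssim 1 + \delta^{-Q}$, which is of the required polynomial type. The local $1$-Poincar\'e inequality \eqref{1poincarelocal} for $d$-balls in $\bbG$ is the Jerison-type estimate \cite[Theorem~2.1]{jerison1986poincare}, valid for the H\"ormander vector fields $X_1, \dots, X_\ell$ associated to the stratified structure.

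With all hypotheses of Theorem \ref{thm1} verified at $\alpha = 1$, the main result applies for every $p > \alpha + 1 = 2$ and yields $I_\mu \gtrsim \cU_{2p/(p+2)}$ with respect to $d$; the endpoint $p = 2$ is covered by the Cheeger-type argument of Section 2 and gives $I_\mu \gtrsim \cU_1 = \cU_{2p/(p+2)}\rvert_{p=2}$, matching the stated inequality. The only real subtlety in this outline is the first step for general step-two groups, since the $H$-type algebraic identities fail and one must instead exploit the homogeneity of $N_\kappa$ and the explicit form of $\nabla_\bbG$ in the coordinates \eqref{grouplawstep2}; this is precisely where the computation of \cite{bou2021coercive} intervenes, and is the step I would expect to re-derive in detail if the cited estimate were not available off the shelf. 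Everything else is verification of structural properties already present in the general theory of stratified groups.
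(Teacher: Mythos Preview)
Your proposal is correct and follows essentially the same route as the paper: both verify the hypotheses of Theorem~\ref{thm1} with $\alpha = 1$ by citing \cite{inglis2012spectral} and \cite{bou2021coercive} for the gauge estimates \eqref{estimates}, reducing \eqref{laplacian} to euclidean identities on the first layer, invoking \cite[Proposition~5.1.4]{bonfiglioli2007stratified} for \eqref{comparison}, and appealing to \cite{varopoulos1991analysis} and \cite{jerison1986poincare} for the local inequalities \eqref{1spilocal} and \eqref{1poincarelocal}. The only cosmetic difference is that you separate the endpoint $p=2$ explicitly, whereas the paper folds it into the statement of Theorem~\ref{thm1} (whose proof already treats $p=\alpha+1$ as a distinct case).
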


This completes a series of works starting from \cite[Theorem~4.19]{inglis2012spectral} wherein it was proved that $\mu$ on a $H$-type group $\bbG$ satisfied the Poincar\'e inequality for $p \geq 2$ and conjectured to satisfy a $2$-super-Poincar\'e inequality for $p > 2$, more precisely that, equivalently, the underlying operator had empty essential spectrum. This was answered affirmatively by \cite[Theorem~4.4]{bruno2017weighted} through spectral theory arguments based on \cite[Theorem~3]{simon2008schrodinger} for the larger class of M\'etivier groups. The equivalent statement for step-two groups from the probabilistic perspective is the content of the $q$-super-Poincar\'e inequality \eqref{qspi} at $q = 2$, see also \cite[Corollary~12]{bou2021coercive} wherein a $2$-$F$-Sobolev inequality \eqref{2fsob0} with $F(x) = x\log(1 + x)^\theta$ was proved with suboptimal $\theta$ and conditions on $p$. 

Furthermore, the isoperimetric inequality is optimal in the sense $2p/(p+2)$ cannot be replaced with anything strictly larger. Indeed, it implies a stronger $1$-super-Poincar\'e inequality (meaning the exponent $2p/(p-2)$ of \eqref{qspi} at $q = 1$ can be improved) which in turn implies a stronger $2$-super-Poincar\'e inequality. However, this would be impossible by an argument of \cite{hebisch2010coercive} which also appears in \cite[Theorem~4.16]{inglis2012spectral}, the latter of which we follow for simplicity since the measure there is the same as the measure here. With $\varphi$ the function defined by \cite[Equation~4.14]{inglis2012spectral}, it satisfies $\int \varphi^2d\mu \gtrsim r^Q\exp(-t^pN^p(\xi_0))$ and $\int \abs{\nabla \varphi}^2d\mu \lesssim r^{Q-2} \exp(-t^pN^p(\xi_0))$ for some $\xi_0 = (0, s) \in \bbG^*$. In their notation, $r = t^{1-p/2}$ where $t > 0$ is a parameter which can be taken arbitrarily large. Since $\beta_2(\epsilon)(\int \abs{\varphi}d\mu)^2 \lesssim \beta_2(\epsilon)r^{2Q}\exp(-2t^pN^p(\xi_0))$, we see that \eqref{qspi} reads
\[ 1 \lesssim \epsilon r^{-2} + r^Q\beta_2(\epsilon)\exp(-t^pN^p(\xi_0)) \lesssim \epsilon t^{p-2} + t^{-Q(p/2-1)}\exp(C\epsilon^{-p/(p-2)} - t^pN^p(\xi_0)) \] 
after division by $r^Q\exp(-t^pN^p(\xi_0))$. Thus if the growth $\beta_2(\epsilon)$ enjoyed an exponent smaller than $p/(p-2)$, for instance say $\beta_2(\epsilon) \lesssim \exp(C\epsilon^{-p/(p-2)+\delta})$ for some $C > 0$ and $0 < \delta \ll 1$, taking $t = \epsilon^{-\gamma}$ for any $\frac{1}{p-2} - \frac{\delta}{p} < \gamma < \frac{1}{p-2}$ yields a contradiction as $\epsilon \rightarrow 0^+$. In any case, the optimality of this isoperimetric inequality is consistent with the fact $\mu$ cannot have gaussian isoperimetry otherwise it would satisfy the $2$-logarithmic Sobolev inequality by an argument of \cite{ledoux1994simple} which in turn contradicts \cite[Theorem~6.3]{hebisch2010coercive}. 

Lastly, although the fact the $2$-super-Poincar\'e inequality implies the correct exponent on $\epsilon$ in the $1$-super-Poincar\'e inequality may suggest the possibility of extracting isoperimetric content directly from the $2$-super-Poincar\'e inequality, it is not known to us how (or even if this is possible). Indeed, the isoperimetric content of \cite[Theorem~3.4]{wang2000functional} is extracted under Bakry-\'Emery type curvature lower bounds (through a reverse Poincar\'e inequality) which are not available in the subelliptic setting. A weaker but sufficient assumption would be the generalised curvature condition of \cite{baudoin2012log}, but since our measures have supergaussian tails but do not satisfy the $2$-logarithmic Sobolev inequality, they cannot satisfy the generalised curvature condition since this would contradict \cite[Theorem~1.4]{baudoin2012log}. 

Before we conclude our discussion of step-two groups, let us note the works \cite{hebisch2010coercive, inglis2011u, inglis2012spectral, antonelli2024sharp} which considered functional inequalities for measures of the form \eqref{measure} on $H$-type groups but with $N$ replaced with the Carnot-Carath\'eodory distance $d$ itself; in fact \cite{inglis2011u, inglis2012spectral} also studied isoperimetric and super-Poincar\'e inequalities respectively. The situation here is somewhat easier in the sense $d$ is better behaved and $d\mu = Z^{-1}e^{-d^p}d\xi$ satisfies the isoperimetric inequality $I_\mu \gtrsim \cU_p$ according to \cite[Theorem~5.6]{inglis2011u}, but also rather delicate in the sense $d$ is neither smooth nor explicit which makes it difficult to verify the essential ingredients replacing \eqref{estimates}, namely the eikonal equation $\abs{\nabla d} = 1$ for Carnot-Carath\'eodory spaces \cite[Theorem~3.1]{monti2001surface}, and the laplacian comparison $\Delta d \lesssim 1/d$ for $H$-type groups \cite[Theorem~6.1]{hebisch2010coercive}. These results generalise to arbitrary step-two groups where the laplacian comparison follows from \cite[Corollary~4.17]{cavalletti2020new}, since such groups are measure contraction spaces \cite[Theorem~3]{badreddine2020measure}.

\subsection{Grushin and Heisenberg-Greiner operators}

While the following operators are not in general the subgradient and sublaplacian on a stratified Lie group, they nonetheless satisfy similar estimates (indeed, the Grushin and Heisenberg-Greiner operators generalise their euclidean and Heisenberg counterparts at a certain choice of parameter) and therefore the conditions of Theorem \ref{thm1}. 

The Grushin subgradient, following \cite{d'ambrosio2004hardy}, is the operator 
\[ \nabla_\eta = (\nabla_x, \abs{x}^\eta \nabla_y), \quad \eta > 0, \] 
acting on $\bbR^n_x \times \bbR^m_y$. We assume moreover $n \geq 2$. The Grushin sublaplacian $\Delta_\eta = \nabla_\eta \cdot \nabla_\eta = \Delta_x + \abs{x}^{2\eta}\Delta_y$ generalises the euclidean laplacian at $\eta = 0$, is homogeneous of order $2$ with respect to the anisotropic dilations $\delta_\lambda(x, y) = (\lambda x, \lambda^{1 + \eta}y)$, and is hypoelliptic for $\eta > 0$  due to \cite[Theorem~1.2]{gruvsin1970class} and known already for even $\eta \in 2\bbZ_{\geq 1}$ by H\"ormander's rank condition. Its ``Kor\'anyi-Folland gauge", in the sense of the fundamental solution of $\Delta_\eta$, is 
\begin{equation}\label{Neta} 
    N_\eta(x, y) = (\abs{x}^{2(1 + \eta)} + (1 + \eta)^2\abs{y}^2)^{1/(2(1 + \eta)}, 
\end{equation}
and satisfies \eqref{estimates} with $\alpha = \eta$ and $\abs{x}$ the euclidean norm of $\bbR^n_x$. The action of $\nabla_\eta$ and $\Delta_\eta$ is euclidean on functions depending only on $x$ as before, giving us \eqref{laplacian}, while \eqref{comparison} follows by previous arguments. The $L^1$-Sobolev (with $Q$ the homogeneous dimension $Q(n, m, \eta) = n + (1 + \eta)m$, in analogue with the group setting) and local $1$-Poincar\'e inequalities are due to \cite[Theorem~1.1]{capogna1994geometric} and \cite[Theorem~2]{franchi1994weighted} respectively, which provides us with a full family of probability measures with arbitrarily fast decay of tails satisfying an isoperimetric inequality arbitrarily close to linear. 

\begin{corollary}\label{cor:grushin}
    Let $\bbR^n_x \times \bbR^m_y$ be the Grushin space with $n \geq 2$, equipped with its Carnot-Carath\'eodory distance $d$ induced by $\nabla_\eta$, $\eta > 0$, and let $N_\eta$ be given by \eqref{Neta}. Then the measure $d\mu = Z^{-1}e^{-N_\eta^p}d\xi$, $p \geq 1 + \eta$, satisfies the isoperimetric inequality 
    \[ I_\mu \gtrsim \cU_{p(1 + \eta)/(p\eta + 1 + \eta)}. \]
\end{corollary}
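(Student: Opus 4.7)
The plan is to reduce the statement to a direct application of Theorem \ref{thm1}, since the discussion preceding the corollary already identifies the structural parallels between the Grushin setting and the general framework in which Theorem \ref{thm1} was formulated. I would take the ambient space to be $\bbR^n_x \times \bbR^m_y$ with $n_1 = n \geq 2$ and $n_2 = m$, subgradient $\nabla = \nabla_\eta$, sublaplacian $\Delta = \Delta_\eta$, norm $N = N_\eta$ from \eqref{Neta}, $\abs{x}$ the euclidean norm on $\bbR^n_x$, distance $d$ the Carnot-Carath\'eodory distance induced by $\nabla_\eta$, and parameter $\alpha = \eta$ in Theorem \ref{thm1}.

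First I would verify the pointwise estimates \eqref{estimates} with $\alpha = \eta$, which amounts to a direct differentiation of the explicit polynomial $N_\eta$ combined with the homogeneity of $\nabla_\eta$ under the anisotropic dilations $\delta_\lambda(x, y) = (\lambda x, \lambda^{1+\eta} y)$. The conditions \eqref{laplacian} are immediate since $\nabla_\eta$ and $\Delta_\eta$ reduce to the euclidean gradient and laplacian on functions depending only on $x$, while \eqref{comparison} splits into $\abs{x} \lesssim N_\eta$, transparent from \eqref{Neta}, and $d \simeq N_\eta$, which is a standard comparison for the Grushin Carnot-Carath\'eodory distance.

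Second, I would invoke the cited local functional inequalities to supply the remaining hypotheses: the $L^1$-Sobolev inequality of Capogna-Garofalo yields \eqref{1spilocal} with $Q = n + (1+\eta)m$ after the H\"older and linearisation steps indicated in the prose following Theorem \ref{thm1}, while \eqref{1poincarelocal} is furnished by Franchi-Serapioni. With every hypothesis verified, Theorem \ref{thm1} applied to $d\mu = Z^{-1}e^{-N_\eta^p}d\xi$ for $p \geq \eta + 1$ and $\alpha = \eta$ delivers $I_\mu \gtrsim \cU_r$ with
\[ r = \frac{(\alpha + 1)p}{(\alpha + 1) + \alpha p} = \frac{(\eta + 1)p}{(\eta + 1) + \eta p} = \frac{p(1+\eta)}{p\eta + 1 + \eta}, \]
which is exactly the asserted exponent.

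No single step is expected to be a deep obstacle, since Theorem \ref{thm1} was designed for modular application and the work is primarily bookkeeping. The most structurally delicate point is the comparison $d \simeq N_\eta$, because the Grushin distance is not attached to a group structure in the way the Carnot-Carath\'eodory distance on a stratified Lie group is attached to its left invariant subgradient; one would close this step by appealing to ball-box type estimates for Grushin balls together with $\delta_\lambda$-homogeneity of both $d$ and $N_\eta$.
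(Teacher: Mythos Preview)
Your proposal is correct and matches the paper's approach essentially verbatim: the paper verifies \eqref{estimates} with $\alpha = \eta$, \eqref{laplacian} from the euclidean action of $\nabla_\eta$ on functions of $x$, \eqref{comparison} ``by previous arguments'', and then cites Capogna--Garofalo and Franchi--Serapioni for \eqref{1spilocal} and \eqref{1poincarelocal} before invoking Theorem~\ref{thm1}. Your only elaboration is in justifying $d \simeq N_\eta$ via ball-box estimates and dilation homogeneity rather than the paper's terse ``by previous arguments'', which is if anything more careful given that the Grushin space lacks a group structure.
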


As before, we can also consider the optimality of Theorem \ref{thm1}, at least for nonnegative integers $\eta \in \bbZ_{\geq 1}$. Briefly, we can modify the proof of \cite[Lemma~6.3]{hebisch2010coercive} to conclude $\abs{N_\eta(\xi) - N_\eta(\xi_0)} \lesssim d^{1 + \eta}(\xi, \xi_0)$ at points $\xi_0 = (0, y) \in (\bbR^n_x \times \bbR^m_y)^*$ since it can be explicitly verified $X_j \cdots X_1N_\eta(\xi_0) = 0$ for $X_i \in \{\partial_{x_1}, \cdots, \partial_{x_n}, \abs{x}^\eta \partial_{y_1}, \cdots, \abs{x}^\eta \partial_{y_m}\}$ any of the vector fields in $\nabla_\eta$ and $1 \leq i, j \leq \eta$. Our previous proof can be replicated with the minor replacement of the $r$-scale by $r = t^{1-p/(1 + \eta)}$ so that \eqref{qspi} reads 
\[ 1 \lesssim \epsilon t^{2(p/(1 + \eta) - 1)} + t^{-Q(p/(1 + \eta)-1)}\exp(C\epsilon^{-p(1 + \eta)/(2(p-1-\eta))} - t^pN^p(\xi_0)) \]
and the previous argument shows the exponent on the $\epsilon$ inside the exponential is optimal.

Lastly, note if $\eta \in (0, 1)$ then $\mu$ has gaussian isoperimetry for $p \geq 2(1 + \eta)/(1 - \eta)$ and in fact, by the fourth and fifth parts of \cite[Theorem~4.5]{inglis2011u}, satisfy the $q$-logarithmic Sobolev inequality (the $L^q$-analogue of \eqref{1fsob} with $F(x) = \log(1 + x)$) as well as the Bobkov type functional isoperimetric inequalities 
\[ \cU_2\left(\int fd\mu\right) \leq \int \sqrt{\cU_2^2(f) + C\abs{\nabla f}^2}d\mu \] 
of \cite{bobkov1996functional, bobkov1997isoperimetric}, see also \cite[\S16]{bobkov2005entropy}. 

Similarly, the Heisenberg-Greiner subgradient, following \cite{d'ambrosio2005hardy}, is the operator 
\[ \nabla_\zeta = (X_\zeta^1, \cdots, X_\zeta^n, Y_\zeta^1, \cdots, Y_\zeta^n), \quad \zeta \geq 1, \]
acting on $(\bbR^n_x \times \bbR^n_y) \times \bbR^1_t$ and where 
\begin{equation}\label{greinerfields}
    X_\zeta^i = \partial_{x_i} + 2\zeta y_i\abs{r}^{2\zeta-2}\partial_t, \quad Y_\zeta^i = \partial_{y_i} - 2\zeta x_i\abs{r}^{2\zeta-2}\partial_t 
\end{equation}
for each $i = 1, \cdots, n$ and where $r = (x, y) \in \bbR^n_x \times \bbR^n_y$. The Heisenberg-Greiner sublaplacian $\Delta_\zeta = \nabla_\zeta \cdot \nabla_\zeta = \sum_{i=1}^n X_i^2 + Y_i^2$ generalises the Heisenberg laplacian at $\zeta = 1$, is homogeneous of order $2$ with respect to the anisotropic dilations $\delta_\lambda(x, y, t) = (\lambda x, \lambda y, \lambda^{2\zeta} t)$, and is hypoelliptic for each nonnegative integer $\zeta \in \bbZ_{\geq 1}$. Again, we can verify the conditions of Theorem \ref{thm1} for
\begin{equation}\label{Nzeta}
    N_\zeta(x, y, t) = (\abs{r}^{4\zeta} + t^2)^{1/(4\zeta)}
\end{equation}
with $\alpha = 2\zeta - 1$ and $(r, t)$ playing the role of $(x, x')$. That the vector fields for $\zeta \in \bbZ_{\geq 1}$ satisfy H\"ormander's rank condition implies the $L^1$-Sobolev (with $Q = Q(n, \zeta) = 2n + 2\zeta$) and local $1$-Poincar\'e inequality follow from \cite[Theorem~1.1]{capogna1994geometric} and \cite[Theorem~2.1]{jerison1986poincare} respectively. Optimality follows from previous arguments. 

\begin{corollary}
    Let $(\bbR^n_x \times \bbR^n_y) \times \bbR^1_t$ be the Heisenberg-Greiner space equipped with its Carnot-Carath\'eodory distance $d$ induced by $\nabla_\zeta$, $\zeta \in \bbZ_{\geq 1}$, and let $N_\zeta$ be given by \eqref{Nzeta}. Then the measure $d\mu = Z^{-1}e^{-N_\zeta^p}d\xi$, $p \geq 2\zeta$, satisfies the isoperimetric inequality 
    \[ I_\mu \gtrsim \cU_{2p\zeta/(p(2\zeta-1)+2\zeta)}. \]
\end{corollary}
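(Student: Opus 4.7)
The plan is to apply Theorem \ref{thm1} with $\alpha = 2\zeta - 1$, writing the generic point of $(\bbR^n_x \times \bbR^n_y) \times \bbR^1_t$ as $\xi = (r, t) \in \bbR^{2n} \times \bbR$ so that $r = (x, y)$ plays the role of the first factor and $n_1 = 2n \geq 2$. Once the hypotheses \eqref{estimates}, \eqref{laplacian}, \eqref{comparison}, \eqref{1spilocal}, and \eqref{1poincarelocal} have been verified, the claim is immediate, since the exponent prescribed by the theorem reads
\[ \frac{(\alpha+1)p}{(\alpha+1) + \alpha p} = \frac{2p\zeta}{p(2\zeta-1) + 2\zeta}. \]

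The bulk of the work is verifying \eqref{estimates}. A direct calculation with \eqref{greinerfields} and \eqref{Nzeta} gives
\[ X_\zeta^i N_\zeta = N_\zeta^{1-4\zeta}\abs{r}^{2\zeta-2}(\abs{r}^{2\zeta}x_i + t y_i), \quad Y_\zeta^i N_\zeta = N_\zeta^{1-4\zeta}\abs{r}^{2\zeta-2}(\abs{r}^{2\zeta}y_i - t x_i), \]
and squaring and summing over $i$ the cross terms cancel, leaving the exact identity $\abs{\nabla_\zeta N_\zeta}^2 = \abs{r}^{4\zeta-2}/N_\zeta^{4\zeta-2}$, which gives both bounds in the first estimate with $\alpha = 2\zeta - 1$. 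For the Laplacian bound I would invoke that $N_\zeta^{2-Q}$ is, up to a constant, the fundamental solution of $\Delta_\zeta$ with homogeneous dimension $Q = 2n + 2\zeta$; applying Leibniz rule to $\Delta_\zeta N_\zeta^{2-Q} = 0$ off the origin forces $\Delta_\zeta N_\zeta = (Q-1)\abs{\nabla_\zeta N_\zeta}^2/N_\zeta = (Q-1)\abs{r}^{2\alpha}/N_\zeta^{2\alpha+1}$. The mixed estimate $\nabla_\zeta N_\zeta \cdot \nabla_\zeta \abs{r} = \abs{r}^{2\alpha+1}/N_\zeta^{2\alpha+1}$ then follows by a short computation, since $\abs{r}$ depends only on the horizontal variable so the $\partial_t$-parts of $X_\zeta^i, Y_\zeta^i$ act trivially and $\nabla_\zeta \abs{r}$ coincides with the euclidean gradient of $\abs{r}$ on $\bbR^{2n}$.

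The same horizontal-dependence observation handles \eqref{laplacian}, yielding $\abs{\nabla_\zeta \abs{r}} = 1$, $\Delta_\zeta \abs{r} = (2n-1)/\abs{r}$, and $\nabla_\zeta \cdot r = 2n$, while \eqref{comparison} follows from $\abs{r} \leq N_\zeta$, immediate from \eqref{Nzeta}, together with the standard gauge-distance equivalence $d \simeq N_\zeta$ arising from homogeneity under the anisotropic dilations $\delta_\lambda(x,y,t) = (\lambda x, \lambda y, \lambda^{2\zeta}t)$. The local inequalities \eqref{1spilocal} and \eqref{1poincarelocal} follow from \cite[Theorem~1.1]{capogna1994geometric} with $Q = 2n + 2\zeta$ and \cite[Theorem~2.1]{jerison1986poincare} respectively, since the $X_\zeta^i, Y_\zeta^i$ satisfy H\"ormander's rank condition for $\zeta \in \bbZ_{\geq 1}$.

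The main obstacle is the clean algebraic collapse producing $\abs{\nabla_\zeta N_\zeta} = \abs{r}^\alpha/N_\zeta^\alpha$ with exactly $\alpha = 2\zeta - 1$: the prefactor bookkeeping around the non-integer exponent $1/(4\zeta)$ in $N_\zeta$ and the $\abs{r}^{2\zeta-2}$ weights appearing in the vertical parts of the fields must be done carefully so that the perpendicular combination $(\abs{r}^{2\zeta}x_i + ty_i)^2 + (\abs{r}^{2\zeta}y_i - tx_i)^2$ reduces to $(\abs{r}^{4\zeta} + t^2)(x_i^2 + y_i^2) = N_\zeta^{4\zeta}(x_i^2 + y_i^2)$. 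Once this identity is secured, everything else is routine or a citation, Theorem \ref{thm1} yields the stated isoperimetric inequality, and optimality of the exponent is inherited by adapting the cutoff-based test-function argument of \cite[Theorem~4.16]{inglis2012spectral} to $N_\zeta$, exactly as in the Grushin and step-two cases.
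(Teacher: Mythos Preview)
Your proposal is correct and follows essentially the same approach as the paper: verify the hypotheses of Theorem \ref{thm1} with $\alpha = 2\zeta - 1$ and $(r,t)$ playing the role of $(x,x')$, then cite \cite[Theorem~1.1]{capogna1994geometric} and \cite[Theorem~2.1]{jerison1986poincare} for the local inequalities via H\"ormander's condition. The paper's own argument is terse, merely asserting the verification can be carried out, whereas you supply the explicit computations for \eqref{estimates}; but the strategy, the choice $\alpha = 2\zeta - 1$, the references, and the appeal to the earlier optimality argument all coincide.
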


\subsection{The one-dimensional case $n_1 = 1$}

\subsubsection{An almost $L^1$-Hardy inequality}

Recalling the Grushin example assumed the dimension $n$ of the first layer, playing the role of $n_1$ in the statement of Theorem \ref{thm1}, satisfied $n \geq 2$, we note this assumption avoids the technical issue of the absence of the $L^1$-Hardy inequality in dimension $1$. It is automatic in the setting of step-two groups, since the group is trivial if the first layer has dimension $1$, and in the Heisenberg-Greiner setting, since the dimension of the singularity $\{r = 0\}$ along which the $U$-bound degenerates is $2n \geq 2$. 

If we look at the proofs, we observe this issue can be sidestepped if one could obtain the nondegenerate $U$-bound \eqref{uboundmerged} without the $L^1$-Hardy inequality, which provides the $1$-super-Poincar\'e and Cheeger inequalities. We can almost resolve the one-dimensional case because there exists an almost $L^1$-Hardy inequality. Indeed, as before with $V = N^p$ and $0 < \delta < 1$ small but fixed,
\[ \delta(1 + \delta)\int \frac{\abs{f}}{\abs{x}^{1-\delta}}d\mu = \int \abs{f}e^{-V} \nabla \cdot \nabla \abs{x}^{1+\delta}d\xi \]
and 
\begin{align*}
    \int \abs{f}e^{-V} \nabla \cdot \nabla \abs{x}^{1+\delta}d\xi &= -\int \nabla \abs{f} \cdot \nabla \abs{x}^{1+\delta} d\mu + \int \abs{f} \nabla V \cdot \nabla \abs{x}^{1+\delta}d\mu \\ 
    &\lesssim (1 + \delta)\left(\int \abs{\nabla f}\abs{x}^\delta d\mu + \int \abs{f}\abs{x}^\delta U_1 d\mu\right). 
\end{align*}
so that 
\begin{equation}\label{almosthardy}
    \int \frac{\abs{f}}{\abs{x}^{1-\delta}}d\mu \lesssim \frac{R_0^\delta}{\delta}\int \abs{\nabla f}d\mu + \frac{R_0^\delta}{\delta} \int \abs{f}U_1d\mu + \frac{1}{R_0^{1-\delta}}\int \abs{f}d\mu 
\end{equation}
by decomposing the space into the $\abs{x}$-ball of radius $R_0 > 0$ and its complement. 

At the endpoint case $p = \alpha + 1$, on the $d$-ball $B_R^c$ of radius $R$ sufficiently large, we recover the analogue 
\[ \int_{B_R^c} \abs{f}d\mu \lesssim \int_{B_R^c} \frac{\abs{f}}{\abs{x}^{1-\delta}}d\mu + \int_{B_R^c} \abs{f}\abs{x}^\alpha d\mu \lesssim \int_{B_R^c} \abs{\nabla f}d\mu + \epsilon \int \abs{f}d\mu \]
of \eqref{criticality5} by \eqref{criticality3} and taking $R_0$ sufficiently large (so that $1/R_0^{1-\delta}$ plays the role of $\epsilon$) in \eqref{almosthardy}. Otherwise, if $p > \alpha + 1$, then \eqref{almosthardy} simply replaces the Hardy inequality \eqref{hardy1} everywhere so that \eqref{uboundmerged} holds with corrected exponent on $N$, namely
\[ \int \abs{f}^qN^{q(p-\alpha-1)(1-\delta)/(\alpha+1-\delta)}d\mu \lesssim \int \abs{\nabla f}^qd\mu + \int \abs{f}^qd\mu. \] 
This leads to the following family of almost isoperimetric inequalities interpolating between linear isoperimetry at $\delta = 1$ and the expected isoperimetry at $\delta = 0$.

\begin{proposition}\label{thm3}
    Assume as in Theorem \ref{thm1} except that $n_1 = 1$. If $p = \alpha + 1$ then $I_\mu \gtrsim \cU_1$, and otherwise if $p > \alpha + 1$ then $I_\mu$ satisfies 
    \begin{equation}\label{almostisoperimetry}
        I_\mu \gtrsim_\delta \cU_r, \quad r = \frac{(\alpha+1-\delta)p}{\alpha+1-\delta + \alpha(p-\delta)} \in \left[1, \frac{(\alpha+1)p}{\alpha+1+\alpha p}\right)
    \end{equation}
    for each $\delta \in (0, 1]$ and where $\gtrsim_\delta$ indicates the constant depends on $\delta$.
\end{proposition}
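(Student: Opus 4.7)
The plan is to formalize the sketch already laid out in the text immediately preceding the statement, which shows that the almost $L^1$-Hardy inequality \eqref{almosthardy} is a sufficient substitute for the genuine $L^1$-Hardy inequality \eqref{hardy1} that fails when $n_1 = 1$. The proof splits into the critical case $p = \alpha + 1$ and the supercritical case $p > \alpha + 1$, paralleling the proof of Theorem \ref{thm1}.

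For the critical case, the argument is essentially complete in the excerpt. One chooses $R > 0$ large enough that \eqref{criticality3} yields $\int_{B_R^c} \abs{f}\abs{x}^\alpha d\mu \lesssim \int \abs{\nabla f}d\mu + \text{(absorbable tail)}$, then applies \eqref{almosthardy} with $R_0$ chosen so that $R_0^{-(1-\delta)}$ plays the role of a small $\epsilon$. Combining these yields $\int_{B_R^c}\abs{f}d\mu \lesssim \int_{B_R^c}\abs{\nabla f}d\mu$, and substituting $f - m$ for $f$ produces the tail Cheeger estimate \eqref{cheegerinfinity}. Combining with the local Cheeger inequality extracted from \eqref{1poincarelocal} via \cite[Equation~2.24]{inglis2011u} then gives the global Cheeger inequality, equivalent to $I_\mu \gtrsim \cU_1$.

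For the supercritical case, I would replay the proofs of Lemma \ref{lem:hardy}, Proposition \ref{prop:spi}, and the $p > \alpha + 1$ part of Theorem \ref{thm1} verbatim, replacing every invocation of \eqref{hardy1} by \eqref{almosthardy}. In the merging step of Proposition \ref{prop:spi}, applying the elementary bound $cx + x^{-s} \gtrsim c^{s/(s+1)}$ with $x = \abs{x}^\alpha$, $c = N^{p-\alpha-1}$, and $s = (1-\delta)/\alpha$ produces the corrected nondegenerate $U$-bound
\[
    \int \abs{f} N^{(p-\alpha-1)(1-\delta)/(\alpha+1-\delta)} d\mu \lesssim \int \abs{\nabla f}d\mu + \int \abs{f}d\mu.
\]
Feeding this into the decomposition argument of Proposition \ref{prop:spi} yields a $1$-super-Poincar\'e inequality with growth $\beta_1(\epsilon) \lesssim_\delta \exp(C\epsilon^{-p(\alpha+1-\delta)/((p-\alpha-1)(1-\delta))})$, and the $F$-Sobolev equivalence of the Lemma preceding the proof of Theorem \ref{thm1} together with \cite[Theorem~4.5]{inglis2011u} then gives $I_\mu \gtrsim_\delta \cU_{1/(1-\theta)}$ for $\theta = (p-\alpha-1)(1-\delta)/(p(\alpha+1-\delta))$. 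A direct computation confirms $1/(1-\theta) = (\alpha+1-\delta)p / (\alpha+1-\delta + \alpha(p-\delta))$, which is the exponent $r$ in \eqref{almostisoperimetry}, with the limiting values $r = 1$ at $\delta = 1$ and $r = (\alpha+1)p / ((\alpha+1) + \alpha p)$ at $\delta = 0$ verified by inspection.

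The main difficulty is bookkeeping rather than conceptual: one must track the $\delta$-dependence of the various constants through the chain of inequalities and confirm that the modified exponent on $N$ propagates correctly to the claimed value of $r$. A secondary technical point is that \eqref{almosthardy} introduces an auxiliary scale $R_0$ in addition to the ball $B_R$ of the Cheeger argument in the critical case, so one must verify that $R$ and $R_0$ can be chosen compatibly so that all error terms are absorbed; this is essentially the same two-scale absorption already used to derive \eqref{ubound1} from \eqref{criticality1}.
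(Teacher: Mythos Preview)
Your proposal is correct and follows exactly the approach the paper takes: the text preceding the proposition is itself the paper's proof sketch, and you have faithfully formalized it, including the correct choice $s = (1-\delta)/\alpha$ in the elementary merging bound and the verification that the resulting exponent on $N$ yields the claimed value of $r$. The only minor imprecision is that in the critical case the tail estimate carries a residual $\epsilon\int\abs{f}d\mu$ term (as in the paper's displayed analogue of \eqref{criticality5}) which must be absorbed after combining with the local Cheeger inequality, but you anticipate this in your closing remark about two-scale absorption.
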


\subsubsection{An $L^1$-Caffarelli-Kohn-Nirenberg inequality}

Alternatively, the reader may have noticed it suffices also to have the Caffarelli-Kohn-Nirenberg inequality \eqref{uncertainty1} at $q = 1$. Indeed, it also provides the $1$-super-Poincar\'e inequality (see the second part of the proof of Proposition \ref{prop:spi}), and it can be linearised to imply the Cheeger inequality (see the proof below). However, to our best knowledge, there is no proof, not passing through the $L^1$-Hardy inequality, of the $L^1$-Caffarelli-Kohn-Nirenberg inequality on a stratified Lie group with respect to Lebesgue measure and where the homogeneous seminorm $\abs{x}$ is replaced with an arbitrary homogeneous norm, and an inequality for $\mu$ appears even harder given the fact the original proof by \cite{caffarelli1984first} depended on the positive homogeneity of Lebesgue measure. For some $L^q$-results, $q > 1$, see, for instance, \cite{ruzhansky2017caffarelli, ruzhansky2018extended, kassymov2022reverse, ruzhansky2024hardy} or the book \cite{ruzhansky2019hardy}. While we will not address the existence of a general $L^1$-Caffarelli-Kohn-Nirenberg inequality for (generic) probability measures $\mu$, we will nevertheless be able to show there exists a property of the Grushin fields which allows one to prove \eqref{uncertainty1} at $q = 1$. Moreover, this property is somewhat generic in the sense it persists for a large class of stratified Lie groups.

\begin{theorem}\label{thm2}
    Assume as in Theorem \ref{thm1} except that $n_1 = 1$ so that $\xi = (x_1, x') \in \bbR^1 \times \bbR^{n-1}$. If $\nabla$ contains $\partial_{x_1}$, then the conclusion of Theorem \ref{thm1} continues to hold. 
\end{theorem}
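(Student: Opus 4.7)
Scanning the proof of Theorem \ref{thm1}, the only place the assumption $n_1 \geq 2$ is genuinely used is to obtain the $L^1$-Hardy inequality \eqref{hardy1} at $q=1$, which feeds into the $L^1$-Caffarelli-Kohn-Nirenberg inequality \eqref{uncertainty1} at $q=1$; the $U$-bound of Lemma \ref{lem:ubound}, the CKN inequality for $q > 1$ (which, as noted in the proof of Lemma \ref{lem:hardy}, requires only $n_1 \geq 1$ together with $\alpha > 0$), Proposition \ref{prop:spi}, and the endpoint analysis of Section 2.2 all respect $n_1 = 1$. My plan is to prove \eqref{uncertainty1} at $q=1$ directly, using the hypothesis $\partial_{x_1} \in \nabla$ as the substitute for Hardy, and then to check that this is enough to run the rest of the argument.

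For the $L^1$-CKN, let $\chi \in \CIc(\bbR)$ satisfy $\chi \equiv 1$ on $[-1,1]$ and $\supp \chi \subseteq [-2,2]$, and for $\lambda > 0$ apply \eqref{ibp0} at $q=1$ with $\omega = \chi(x_1/\lambda)$ and with the vector field $h$ whose component along $X_1 = \partial_{x_1}$ equals $x_1$ and whose remaining components vanish. The hypothesis $\partial_{x_1} \in \nabla$ is exactly what makes $\nabla \cdot h = \partial_{x_1}(x_1) = 1$; the three terms on the right side of \eqref{ibp0} are then controlled respectively by $2\lambda \int |\nabla f|\,d\mu$ (since $\supp \chi(\cdot/\lambda) \subseteq [-2\lambda, 2\lambda]$), by $2\lambda \int |f| U_1\,d\mu \lesssim \lambda B$ with $B = \int |\nabla f|\,d\mu + \int |f|\,d\mu$ (using $|\partial_{x_1} V| \lesssim U_1$ from \eqref{estimates} and Lemma \ref{lem:ubound}), and by $C \int_{|x_1| \geq \lambda} |f|\,d\mu \leq C \lambda^{-\alpha} A$ with $A = \int |f| |x|^\alpha\,d\mu$ (since $|\chi'(x_1/\lambda)\,x_1/\lambda|$ is bounded and supported in $\lambda \leq |x_1| \leq 2\lambda$, whence the tail is absorbed by Chebyshev against $|x_1|^\alpha$). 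Combined with the trivial estimate $\int_{|x_1| > \lambda} |f|\,d\mu \leq \lambda^{-\alpha} A$ this yields $\int |f|\,d\mu \lesssim \lambda B + \lambda^{-\alpha} A$, and optimizing in $\lambda > 0$ produces \eqref{uncertainty1} at $q=1$ with the exponents $1/(\alpha+1)$ and $\alpha/(\alpha+1)$. This cutoff IBP is the main content of the argument: without the identification $\partial_{x_1} \in \nabla$ no single subgradient direction can be multiplied by a scalar to produce a vector field of constant divergence on $\bbR^{n_1} = \bbR$, and the balance $\lambda B + \lambda^{-\alpha} A$ underlying the correct CKN exponents would be out of reach.

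The remainder of the proof of Theorem \ref{thm1} then goes through with only cosmetic changes. For $p > \alpha+1$, Proposition \ref{prop:spi} already offers a CKN-based route for bounding $\int_{B_R^c} |f|^q\,d\mu$ valid uniformly for $q \in [1,2]$, so substituting our $L^1$-CKN yields the $1$-super-Poincar\'e inequality, and the passage to isoperimetry via \cite[Theorem~4.5]{inglis2011u} is unchanged. For $p = \alpha+1$, apply Lemma \ref{lem:ubound} to $f - m$; since $U_1/N^p = |x|^\alpha N^{-(\alpha+1)} \leq R^{-(\alpha+1)} U_1$ on $B_R^c$, the resulting inequality absorbs on the tail, and handling the $B_R$ contribution via \eqref{1poincarelocal} for $m = \xi(B_R)^{-1}\int_{B_R} f\,d\xi$ yields $\int |f - m| |x|^\alpha\,d\mu \lesssim \int |\nabla f|\,d\mu$. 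Feeding this into \eqref{uncertainty1} at $q=1$ applied to $f - m$ and bootstrapping (in the nontrivial regime $\int |f-m|\,d\mu > \int |\nabla f|\,d\mu$ one has $Y + X \leq 2X$, so a power of $X$ cancels across the inequality) gives the Cheeger inequality $\int |f - m|\,d\mu \lesssim \int |\nabla f|\,d\mu$, and hence $I_\mu \gtrsim \cU_1$ exactly as in Section 2.2.
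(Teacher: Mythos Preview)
Your argument is correct and reaches the same conclusion, but the two main steps are organised differently from the paper. For the $L^1$-CKN inequality \eqref{uncertainty1} at $q=1$, the paper simply quotes the one-dimensional euclidean CKN inequality of \cite{caffarelli1984first} on $\bbR^1_{x_1}$, integrates in $x'$ via H\"older, and then transfers from Lebesgue measure to $\mu$ by substituting $f\mapsto fe^{-V}$ and invoking $|\nabla(fe^{-V})|\lesssim(|\nabla f|+|f|U_1)e^{-V}$ together with Lemma~\ref{lem:ubound}; your cutoff--IBP--optimise argument is a self-contained alternative (essentially the scaling proof of Nash-type interpolation inequalities) that avoids citing the euclidean result. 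For the Cheeger inequality, the paper applies the just-proved CKN inequality on $B_R^c$, linearises via Young, and controls the resulting $\int_{B_R^c}|f|\,|x_1|^\alpha\,d\mu$ through the tail $U$-bound \eqref{criticality3}, obtaining \eqref{cheegerinfinity} for all $p\geq\alpha+1$ in one stroke; your route instead first bounds $A=\int|f-m|\,|x|^\alpha\,d\mu\lesssim\int|\nabla f|\,d\mu$ (tail $U$-bound on $B_R^c$, local Poincar\'e on $B_R$) and then bootstraps through the global CKN inequality, which is a legitimate variation.

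Two small expository points. First, your opening claim that ``the endpoint analysis of Section~2.2 respects $n_1=1$'' is not accurate as stated, since \eqref{criticality4} there \emph{is} the Hardy inequality; you correctly replace that step later, so this is only a phrasing issue. Second, for $p>\alpha+1$ you assert that ``the passage to isoperimetry via \cite[Theorem~4.5]{inglis2011u} is unchanged,'' but the paper's passage uses the Cheeger inequality obtained from the nondegenerate $U$-bound \eqref{uboundmerged}, which in turn rests on Hardy and is therefore unavailable when $n_1=1$. Your $p=\alpha+1$ Cheeger argument extends verbatim (indeed more easily, since $|x|^\alpha\leq U_1/(cR)^{p-\alpha-1}$ on $B_R^c$) to $p>\alpha+1$, so this is a gap in presentation rather than in mathematics; it would be cleaner to run your Cheeger argument once for all $p\geq\alpha+1$, as the paper does.
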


\begin{proof}
    Since $\nabla$ contains the euclidean field $\partial_{x_1}$, we can apply the \emph{euclidean} Caffarelli-Kohn-Nirenberg inequality \cite[Theorem~1]{caffarelli1984first} on $\bbR^1_{x_1}$ with respect to $\partial_{x_1}$ and start with the one-dimensional inequality
    \[ \int_{\bbR^1_{x_1}} \abs{f}dx_1 \lesssim \left(\int_{\bbR^1_{x_1}} \abs{f}\abs{x_1}^{\alpha}dx_1\right)^{1/(\alpha+1)}\left(\int_{\bbR^1_{x_1}} \abs{\partial_{x_1}f}dx_1\right)^{\alpha/(\alpha+1)}. \]
    We can then integrate with respect to the remaining variables $x' \in \bbR^{n-1}$ so that 
    \begin{equation}\label{1dimckn}
        \begin{aligned}
            \int \abs{f}d\xi &\lesssim \int_{\bbR^{n-1}_{x'}} \left[\left(\int_{\bbR^1_{x_1}} \abs{f}\abs{x_1}^\alpha dx_1\right)^{1/(\alpha+1)}\left(\int_{\bbR^1_{x_1}} \abs{\partial_{x_1}f}dx_1\right)^{\alpha/(\alpha+1)}\right]dx' \\
            &\lesssim \left(\int \abs{f}\abs{x_1}^\alpha d\xi\right)^{1/(\alpha+1)} \left(\int \abs{\partial_{x_1}f}d\xi\right)^{\alpha/(\alpha+1)} \vphantom{\int_{\bbR^{n-1}_{x'}} \left[\left(\int_{\bbR^1_{x_1}} \abs{f}\abs{x_1}^\alpha dx_1\right)^{1/(\alpha+1)}\left(\int_{\bbR^1_{x_1}} \abs{\partial_{x_1}f}dx_1\right)^{\alpha/(\alpha+1)}\right]dx'} \\
            &\lesssim \left(\int \abs{f}\abs{x_1}^\alpha d\xi\right)^{1/(\alpha+1)} \left(\int \abs{\nabla f}d\xi\right)^{\alpha/(\alpha+1)} \vphantom{\int_{\bbR^{n-1}_{x'}} \left[\left(\int_{\bbR^1_{x_1}} \abs{f}\abs{x_1}^\alpha dx_1\right)^{1/(\alpha+1)}\left(\int_{\bbR^1_{x_1}} \abs{\partial_{x_1}f}dx_1\right)^{\alpha/(\alpha+1)}\right]dx'}
        \end{aligned}        
    \end{equation} 
    by H\"older's inequality and since $\abs{\partial_{x_1}f} \leq \abs{\nabla f}$. This is the $L^1$-Caffarelli-Kohn-Nirenberg inequality for $\nabla$ we seek, but unfortunately with respect to Lebesgue measure. To obtain the same inequality with respect to $\mu$, write $d\mu = Z^{-1}e^{-V}d\xi$ for $V = N^p$, and then replace directly $f$ with $fe^{-V}$. Then by \eqref{estimates}, we have 
    \begin{equation}\label{insertion} 
        \abs{\nabla fe^{-V}} \lesssim \abs{\nabla f}e^{-V} + \abs{f}N^{p-1}\abs{\nabla N}e^{-V} \lesssim \abs{\nabla f}e^{-V} + \abs{f}U_1e^{-V}
    \end{equation}
    so that by the $U$-bound \eqref{ubound1}, we arrive at \eqref{uncertainty1} with $q = 1$. 

    If $p > \alpha + 1$, we already showed how to obtain the super-Poincar\'e inequality in Proposition \ref{prop:spi}, so all which remains is the Cheeger inequality. For each $p \geq \alpha + 1$, we see that 
    \begin{align*}
        \int_{B_R^c} \abs{f}d\mu &\lesssim \left(\int_{B_R^c} \abs{f}\abs{x_1}^\alpha d\mu\right)^{1/(\alpha+1)}\left(\int_{B_R^c} \abs{\nabla f}d\mu + \int_{B_R^c} \abs{f}d\mu\right)^{\alpha/(\alpha+1)} \\
        &\lesssim \delta \int_{B_R^c} \abs{\nabla f}d\mu + \delta \int_{B_R^c} \abs{f}d\mu \int_{B_R^c} \abs{f}\abs{x_1}^\alpha d\mu \vphantom{\lesssim \left(\int_{B_R^c} \abs{\nabla f}d\mu\right)^{\alpha/(\alpha+1)}\left(\int_{B_R^c} \abs{f}\abs{x}^\alpha d\mu\right)^{1/(\alpha+1)}} \\ 
        &\lesssim \delta \int_{B_R^c} \abs{\nabla f}d\mu + \delta \int_{B_R^c} \abs{f}d\mu + \frac{1}{R^{p-\alpha-1}} \int_{B_R^c} \abs{f}U_1 d\mu \vphantom{\lesssim \left(\int_{B_R^c} \abs{\nabla f}d\mu\right)^{\alpha/(\alpha+1)}\left(\int_{B_R^c} \abs{f}\abs{x}^\alpha d\mu\right)^{1/(\alpha+1)}}
    \end{align*}
    we have directly \eqref{criticality5} after taking into account \eqref{criticality3} for $R$ sufficiently large and $\delta$ small enough to absorb the $L^1$-defect into the left hand side. Given the $1$-super-Poincar\'e inequality and this Cheeger inequality, the rest follows as before. 
\end{proof}

\begin{corollary}
    The conclusion of Corollary \ref{cor:grushin} continues to hold in the case $n = 1$. 
\end{corollary}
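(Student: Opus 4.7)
The plan is to reduce the statement to a direct application of Theorem \ref{thm2}. The point is that the Grushin subgradient is structurally compatible with the hypothesis on $\nabla$ in that theorem, so essentially no new work is required beyond a short verification.

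First, I would recall the explicit form of the Grushin subgradient on $\bbR^1_x \times \bbR^m_y$, namely
\[ \nabla_\eta = \bigl(\partial_{x_1},\, \abs{x_1}^\eta \partial_{y_1},\, \ldots,\, \abs{x_1}^\eta \partial_{y_m}\bigr), \]
and observe that the euclidean field $\partial_{x_1}$ is one of its components. This is precisely the extra hypothesis imposed in Theorem \ref{thm2} on top of those of Theorem \ref{thm1}.

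Next, I would note that all the remaining hypotheses of Theorem \ref{thm1} have already been checked in the discussion immediately preceding Corollary \ref{cor:grushin} in Section 3.2 and that none of those verifications actually used $n = n_1 \geq 2$. Specifically: the estimates \eqref{estimates} on $N_\eta$ hold with $\alpha = \eta$ by the computations in the $n \geq 2$ case, which are dimension-agnostic in $x$; the action of $\nabla_\eta$ and $\Delta_\eta$ on functions of $x_1$ alone is euclidean, so \eqref{laplacian} holds with $n_1 = 1$ (interpreted distributionally away from $\{x_1 = 0\}$, which is harmless since the $L^1$-Hardy step is bypassed by Theorem \ref{thm2}); the comparability \eqref{comparison} between $N_\eta$ and $d$ is unchanged; and the local $L^1$-Sobolev and local $1$-Poincar\'e inequalities \eqref{1spilocal} and \eqref{1poincarelocal} follow from \cite[Theorem~1.1]{capogna1994geometric} and \cite[Theorem~2]{franchi1994weighted} for any $n \geq 1$.

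With these facts in hand, Theorem \ref{thm2} applies with $\alpha = \eta$ and delivers, for each $p \geq 1 + \eta$, the isoperimetric inequality
\[ I_\mu \gtrsim \cU_{p(1+\eta)/(p\eta + 1 + \eta)}, \]
which is exactly the statement of Corollary \ref{cor:grushin} in the remaining case $n = 1$. There is no genuine obstacle here; the only mildly delicate point is ensuring that the verifications of Section 3.2 truly go through for $n_1 = 1$, and this is handled uniformly by the fact that Theorem \ref{thm2} was designed precisely to sidestep the failure of the $L^1$-Hardy inequality in one dimension using the presence of $\partial_{x_1}$ in $\nabla$.
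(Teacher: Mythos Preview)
Your proposal is correct and follows essentially the same approach as the paper: the corollary is stated there as an immediate consequence of Theorem \ref{thm2}, relying on the fact that $\nabla_\eta$ contains $\partial_{x_1}$ (indeed the full euclidean gradient $\nabla_x$), and the accompanying remark makes exactly this point. Your more explicit verification of the remaining hypotheses is fine but not strictly needed, since the paper treats the corollary as a one-line application.
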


\begin{remark}
    In fact, in the Grushin setting the previous argument can go through in the general case $n_1 \geq 1$ since $\nabla_\eta$ contains the full euclidean gradient $\nabla_x$. 
\end{remark}

Lastly, we show an adaptation of the previous idea allows us to study a similar situation which plays out again in the setting of stratified Lie groups. Our starting point is \cite[Theorem~4.3]{chatzakou2023q} wherein a $q$-Poincar\'e inequality was proved for a probability measure of the form \eqref{measure} on a group $\bbG \cong (\bbR^{n+1}, \circ)$ of step $n \geq 3$ with filiform Lie algebra, meaning $\mf{g}$ is generated by two vector fields $X_1$ and $X_2$ so that thus, in our earlier notation, $\mf{g}_i$ for $1 \leq i \leq n-1$ is spanned by a single vector field. The main objects were 
\begin{enumerate}
    \item 
    \[ \norm{x}^n = \sum_{j=2}^n \left(\abs{x_1}^{(n+1)/2} + \abs{x_2}^{(n+1)/2} + \abs{x_j}^{(n+1)/(2(j-1))}\right)^{2n/(n+1)}, \]
    \item $N(x) = (\norm{x}^n + \abs{x_{n+1}})^{1/n}$, 
    \item $X_1 = \partial_{x_1}$, and 
    \item $X_2 = \partial_{x_2} + x_1 \partial_{x_3} + \frac{x_1^2}{2}\partial_{x_4} + \cdots + \frac{x_1^{n-1}}{(n-1)!}\partial_{x_{n+1}}$. 
\end{enumerate}
What is interesting is that the proof of the $U$-bound does not involve taking the full subgradient $\nabla = \nabla_\bbG = (X_1, X_2)$ but only the $X_1$-derivative and exploits the fact 
\begin{equation}\label{filiformestimate}
    X_1 N \cdot X_1 \abs{x_1} \gtrsim \abs{x_1}^{(n-1)/2}\norm{x}^{(n-1)/2}N^{-(n-1)} \gtrsim \abs{x_1}^{n-1}N^{-(n-1)}
\end{equation}
so that if $V = N^p$ as before then integrating by parts shows 
\begin{align*}
    0 = -\int \abs{f}X_1^2\abs{x_1}d\mu &= \int X_1(\abs{f}e^{-V}) \cdot X_1\abs{x_1}d\xi \\
    &= \int X_1\abs{f} \cdot X_1\abs{x_1}d\mu - \int \abs{f}N^{p-1}X_1N \cdot X_1\abs{x_1}d\mu \\
    &\lesssim \int \abs{\nabla_\bbG f}d\mu - \int \abs{f}\abs{x_1}^{n-1}N^{p-n}d\mu
\end{align*} 
since $X_1\abs{f} \leq \abs{X_1f} \leq \abs{\nabla_\bbG f}$ and $X_1^2\abs{x_1} = 0$. This rearranges into a $U$-bound 
\begin{equation}\label{filiformubound}
    \int \abs{f}\abs{x_1}^{n-1}N^{p-n}d\mu \lesssim \int \abs{f}N^{p-1}X_1N \cdot X_1\abs{x_1}d\mu \lesssim \int \abs{\nabla_\bbG f}d\mu
\end{equation}
so that, together with an almost $L^1$-Hardy inequality \eqref{almosthardy} with singularity along $\{\abs{x_1} = 0\}$, if $p = \alpha + 1 = n$ then we obtain linear isoperimetry $I_\mu \gtrsim \cU_1$, and otherwise if $p > n$ then we have the family \eqref{almostisoperimetry} of isoperimetric inequalities, with respect to the Carnot-Carath\'eodory distance $d$ on $\bbG$ induced by $\nabla_\bbG$. To achieve the expected isoperimetric inequality corresponding to the endpoint case $\delta = 0$ in \eqref{almostisoperimetry} however, we would like to prove the $L^1$-Caffarelli-Kohn-Nirenberg inequality as was done in the Grushin setting. The setup is exactly the same; $\nabla_\bbG$ contains $X_1 = \partial_{x_1}$, except the $U$-bound was not proved with respect to $\nabla_\bbG$ but $X_1$ and we do not know if $\abs{\nabla_\bbG N} \lesssim X_1N \cdot X_1\abs{x_1}$. However, what is true is that 
\[ \abs{X_1N} \lesssim \abs{x_1}^{(n-1)/2} \norm{x}^{(n-1)/2} N^{-(n-1)} \lesssim X_1N \cdot X_1\abs{x_1} \]
which follows from explicit computation and \eqref{filiformestimate}, so we can return to the second inequality of \eqref{1dimckn} with $\alpha = n-1$ and replace $f$ with $fe^{-V}$ before replacing $X_1$ with $\nabla_\bbG$, so that 
\begin{align*}
    \abs{X_1fe^{-V}} \lesssim \abs{X_1f}e^{-V} + \abs{f}N^{p-1}\abs{X_1N}e^{-V} &\lesssim \abs{X_1f}e^{-V} + \abs{f}X_1N \cdot X_1\abs{x_1}e^{-V}
\end{align*}
becomes the analogue of \eqref{insertion}, giving together with \eqref{filiformubound}
\begin{align*}
    \int \abs{f}d\mu &\lesssim \left(\int \abs{f}\abs{x}^{n-1}d\mu\right)^{1/n}\left(\int \abs{\nabla_\bbG f}d\mu + \int \abs{f}d\mu\right)^{(n-1)/n}. 
\end{align*}

\begin{proposition}
    Let $\bbG \cong (\bbR^{n+1}, \circ)$ be a stratified Lie group with filiform Lie algebra of step $n \geq 3$ equipped with its Carnot-Carath\'eodory distance $d$ induced by $\nabla_\bbG$ and the homogeneous norm $N$ as defined in \textup{\cite[pp.~19]{chatzakou2023q}}. Then the measure $d\mu = Z^{-1}e^{-N^p}d\xi$, $p \geq n$, satisfies the isoperimetric inequality 
    \[ I_\mu \gtrsim \cU_{pn/(n+p(n-1))}. \]    
\end{proposition}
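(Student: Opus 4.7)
The plan is to mirror the proof of Theorem \ref{thm2}, with the one-sided $U$-bound \eqref{filiformubound} and the $L^1$-Caffarelli-Kohn-Nirenberg inequality just established on $\bbG$ playing the roles of \eqref{ubound1} and \eqref{uncertainty1} respectively, specialised to $\alpha = n-1$ so that the target exponent $(\alpha+1)p/((\alpha+1)+\alpha p)$ is exactly $pn/(n+p(n-1))$.

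For $p > n$, I would first derive the $1$-super-Poincar\'e inequality by the standard ball-complement decomposition of $\bbG$. On a $d$-ball $B_R$ the measure $\mu$ is a log-bounded perturbation of Lebesgue measure, so the local $1$-super-Poincar\'e inequality \eqref{1spilocal}, available on $\bbG$ from the $L^1$-Sobolev inequality \cite[Theorem~IV.7.1]{varopoulos1991analysis}, together with \eqref{filiformubound} handles the interior contribution in the same way as in the proof of Proposition \ref{prop:spi}. On $B_R^c$, where $N \gtrsim R$ by \eqref{comparison}, the trivial bound $\abs{x_1}^{n-1} \leq R^{-(p-n)}\abs{x_1}^{n-1}N^{p-n}$ substituted into the $L^1$-Caffarelli-Kohn-Nirenberg inequality and followed by \eqref{filiformubound} yields
\begin{equation*}
    \int_{B_R^c}\abs{f}d\mu \lesssim \frac{1}{R^{(p-n)/n}}\lrpar{\int\abs{\nabla_\bbG f}d\mu + \int\abs{f}d\mu}.
\end{equation*}
Balancing with $R = \epsilon^{-n/(p-n)}$ gives $\beta_1(\epsilon) \lesssim \exp(C\epsilon^{-pn/(p-n)})$, so the equivalent defective $1$-$F$-Sobolev inequality comes with $\theta = (p-n)/(pn)$, matching $(p-\alpha-1)/(p(\alpha+1))$ from the proof of Theorem \ref{thm1}.

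For the Cheeger inequality, following Theorem \ref{thm1} and Theorem \ref{thm2} it suffices to prove the analogue of \eqref{cheegerinfinity} on $B_R^c$ for $R$ large. Applying the $L^1$-Caffarelli-Kohn-Nirenberg bound to $f - m$ and using \eqref{filiformubound} yields $\int_{B_R^c}\abs{f-m}d\mu \lesssim \delta\int\abs{\nabla_\bbG f}d\mu + \delta\int_{B_R^c}\abs{f-m}d\mu + R^{-(p-n)}\int\abs{\nabla_\bbG f}d\mu$, which closes by taking $\delta$ small and $R$ large. In the endpoint case $p = n$ the $R$-term vanishes and the argument culminating in \eqref{criticality5} carries over verbatim with \eqref{filiformubound} in place of \eqref{criticality3}. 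Feeding the $1$-super-Poincar\'e and Cheeger inequalities into \cite[Theorem~4.5]{inglis2011u} then produces $I_\mu \gtrsim \cU_r$ for $r = pn/(n+p(n-1))$, exactly as in the last step of the proof of Theorem \ref{thm1}.

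The main obstacle is not a new computation but justifying the use of the anisotropic (only $x_1$-weighted) $L^1$-Caffarelli-Kohn-Nirenberg inequality together with the one-sided $U$-bound \eqref{filiformubound}, in place of the symmetric ingredients available in the Grushin and $H$-type settings. The filiform structure exploited in \cite{chatzakou2023q}, via \eqref{filiformestimate}, provides the pointwise bound $\abs{X_1N} \lesssim X_1N \cdot X_1\abs{x_1}$ that makes the analogue of \eqref{insertion} go through with only the $X_1$-derivative; this is the compatibility that allows the $L^1$-Caffarelli-Kohn-Nirenberg inequality (integrated from a one-dimensional bound in $x_1$) and \eqref{filiformubound} to close the argument. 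Once this is accepted, the rest is a routine specialisation of the arguments of Section~2.
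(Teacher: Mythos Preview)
Your proposal is correct and follows essentially the same route as the paper: the paper's argument for this Proposition consists precisely of deriving the $L^1$-Caffarelli--Kohn--Nirenberg inequality for $\mu$ from the one-dimensional euclidean inequality in $x_1$ via the compatibility $\abs{X_1N}\lesssim X_1N\cdot X_1\abs{x_1}$ and the $U$-bound \eqref{filiformubound}, after which the scheme of Theorem~\ref{thm2} (ball--complement, $1$-super-Poincar\'e, Cheeger, then \cite[Theorem~4.5]{inglis2011u}) applies with $\alpha=n-1$. One small point: on $B_R$ you cannot literally invoke \eqref{filiformubound} to control $\int_{B_R}\abs{f}\abs{\nabla_\bbG V}d\mu$ as in Proposition~\ref{prop:spi}, since only $\abs{X_1N}$, not the full $\abs{\nabla_\bbG N}$, is dominated by $X_1N\cdot X_1\abs{x_1}$; however $\abs{\nabla_\bbG N}$ is bounded by homogeneity, so $\abs{\nabla_\bbG V}\lesssim R^{p-1}$ on $B_R$, and the extra polynomial factor in $R$ is harmlessly absorbed into the exponential growth $\exp(CR^p)$ of $\beta_1$.
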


\begin{remark}
    The same arguments hold for those measures considered in and satisfying the conditions of \cite[Lemma~4.6]{chatzakou2022poincare}. There is again a $U$-bound of the form \eqref{filiformubound} vanishing on the zero locus of a single coordinate $x_i$ and a vector field $X_i = \partial_{x_i}$ in $\nabla$ satisfying the estimates $\abs{X_iN} \lesssim X_iN \cdot X_i\abs{x_i}$ and $\abs{x_i}^\alpha N^{-\alpha} \lesssim X_iN \cdot X_i\abs{x_i}$ for some $\alpha \in \bbZ_{\geq 1}$. 
\end{remark}

In dimension one these $L^1$-Caffarelli-Kohn-Nirenberg inequalities allow us to achieve the endpoint case in Proposition \ref{thm3} which unfortunately the $L^1$-Hardy inequality falls just short of, but we intentionally postponed their treatment to this penultimate section since the proofs here are of a different flavour, requiring some additional conditions on the vector fields in $\nabla$ and estimates on $N$ and standing in contrast to the arguments based on partial integration and H\"older's inequality which appeared in earlier proofs.

\subsection{The anisotropic Heisenberg group}

As one final application of our ideas (however not of Theorem \ref{thm1}), we prove one more isoperimetric inequality, this time on the anisotropic Heisenberg group $\bbH^n(\frac{1}{2}, 1)$, $n \geq 2$, on which functional inequalities for measures of the form \eqref{measure} were studied in \cite{dagher2022coercive}. The group structure generalises the usual Heisenberg group $\bbH^n = (\bbR^n_x \times \bbR^n_y) \times \bbR^1_t$ structure equipped with the Heisenberg-Greiner fields \eqref{greinerfields} at $\zeta = 1$, namely $X^1_1 = \partial_{x_1} + 2\zeta y_1\partial_t$ and $Y^1_1 = \partial_{y_1} - 2\zeta x_i\partial_t$ are modified by replacing the constant $2$ with another nonzero constant. Actually, the definition given in \cite{dagher2022coercive} is different to ours; it is however isomorphic, so in the sequel we adopt their definition since we will use some of their estimates.  

Let $\bbG = \bbH^n(\frac{1}{2}, 1) \cong \bbR^{2n}_x \times \bbR^1_t$ be equipped with the vector fields $\nabla = (X_1, \cdots, X_{2n})$ given by \cite[pp.~3]{dagher2022coercive} and measures $d\mu = Z^{-1}e^{-V}d\xi$, $V = N^p$, of the form \eqref{measure} where $N^{-2n}$ is the fundamental solution of the sublaplacian $\Delta = \sum_{i=1}^{2n} X_i^2$ given by \cite[Theorem~1]{dagher2022coercive} satisfying the estimates of \cite[Lemma~2]{dagher2022coercive}. In particular, $N$ is \emph{not} a Kaplan-type homogeneous norm of the form \eqref{Nkappa} and so falls outside the scope of \hyperref[S3.1]{\S3.1}. Moreover, $N$ does not verify \eqref{estimates}. If we work backwards for instance, then we may expect because $\abs{\nabla N} \simeq \abs{x}N^{-1}$ an $L^1$-Hardy inequality 
\begin{equation}\label{anisotropichardy1}
    \int \frac{\abs{f}}{\abs{x}}d\mu \lesssim \int \abs{\nabla f}d\mu + \int \abs{f}\abs{x}N^{p-2}d\mu
\end{equation}
which leads us to seek a $U$-bound for $U_1 = \abs{x}N^{p-2}$. This follows from developing $\int \nabla(\abs{f}e^{-V}) \cdot \abs{x}^{-1}N\nabla N d\xi$ since $\Delta N \simeq \abs{\nabla N}^2N^{-1}$, giving  
\begin{equation}\label{anisotropicubound}
    \begin{aligned}
        \int \abs{f}U_1d\mu &\lesssim \int \abs{\nabla f}d\mu + \int \abs{f}U_1/N^p d\mu + \int \frac{\abs{f}}{\abs{x}}d\mu.
    \end{aligned}
\end{equation}
However it is unclear to us how to control the last addend. It is true that if one tracks dimension dependent constants, namely the fact \eqref{anisotropichardy1} carries the constant $2n-1$, there is a positive answer for $n$ sufficiently large because $\abs{\nabla N} \simeq \abs{x}N^{-1}$ with dimension free constants. To find a more satisfying answer, it turns out estimates of the form \eqref{estimates} are too rough and one needs to start with the $L^1$-Hardy inequality
\begin{equation}\label{anisotropichardy2}
    \int \frac{\abs{f}}{\abs{x}}d\mu = \int \abs{f}\hat{\nabla} \cdot \hat{\nabla}\abs{x}d\mu \lesssim \int \smallabs{\hat{\nabla} f}d\mu + \int \abs{f}N^{p-1}(\hat{\nabla}N \cdot \hat{\nabla}\abs{x})_+d\mu 
\end{equation}
where $\smash{\hat{\nabla} = (X_2, \cdots, X_n, X_{n+2}, \cdots, X_{2n})}$. Since $\smash{\hat{\nabla} N \cdot \hat{\nabla} \abs{x} = \sum_{i \neq 1, n+1} \partial_{x_i}N \cdot \partial_{x_i}\abs{x}}$, by examining \cite[Equation~2.20]{dagher2022coercive} more carefully we find
\[ \partial_{x_i}N \cdot \partial_{x_i}\abs{x} = \frac{x_i}{\abs{x}}\frac{x_i}{N}\left(\varphi_1 - \varphi_2\right), \quad i \neq 1, n+1, \]
for a pair of smooth and bounded nonnegative functions $\varphi_1$ and $\varphi_2$ homogeneous of order zero. The main point is that it is actually $\varphi_2$ which prevents us from improving our current bound $\nabla N \cdot \nabla \abs{x} \lesssim \abs{x}N^{-1}$, and that since we may verify $\varphi_1 \lesssim \abs{x}^2N^{-2} \lesssim \abs{x}N^{-1}$, we deduce \eqref{anisotropichardy2} improves \eqref{anisotropichardy1} to  
\begin{equation}\label{anisotropichardy3}
    \int \frac{\abs{f}}{\abs{x}}d\mu \lesssim \int \smallabs{\hat{\nabla} f}d\mu + \int \abs{f}\abs{x}^2N^{p-3}d\mu \lesssim \int \abs{\nabla f}d\mu + \int \abs{f}\abs{x}^2N^{p-3}d\mu.
\end{equation}
Lastly, we have the $U$-bound 
\[ \int \abs{f}\abs{x}^2N^{p-3}d\mu \lesssim \int \abs{\nabla f}d\mu + \int \abs{f}\abs{x}^2N^{p-3}/N^pd\mu, \]
which follows by developing $\int \nabla(\abs{f}e^{-V}) \cdot \nabla N d\xi$, so together with \eqref{anisotropicubound} we have all the necessary ingredients as the ones appearing in the step-two group case (at $\alpha = 1$) and therefore $\mu$ satisfies the same isoperimetric inequality. 

\begin{theorem}
    Let $\bbG \cong \bbR^{2n}_x \times \bbR^1_t$ be the anisotropic Heisenberg group $\bbH^n(\frac{1}{2}, 1)$ equipped with its Carnot-Carath\'eodory distance $d$ induced by $\nabla_\bbG$ and $N^{-2n}$ the fundamental solution of the sublaplacian as defined in \textup{\cite{dagher2022coercive}}. Then the measure $d\mu = Z^{-1}e^{-N^p}d\xi$, $p \geq 2$, satisfies the isoperimetric inequality
     \[ \cI_\mu \gtrsim \cU_{2p/(p+2)}. \]
\end{theorem}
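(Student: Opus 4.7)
The plan is to verify that, despite $N$ not satisfying the estimates \eqref{estimates} directly, the two global ingredients required by the $\alpha = 1$ case of Theorem \ref{thm1} --- namely the $U$-bound $\int \abs{f}\abs{x}N^{p-2}d\mu \lesssim \int \abs{\nabla f}d\mu + \int \abs{f}d\mu$ and the $L^1$-Hardy inequality $\int \abs{f}/\abs{x}d\mu \lesssim \int \abs{\nabla f}d\mu + \int \abs{f}d\mu$ --- nevertheless hold on $\bbH^n(\frac{1}{2},1)$. Since this group is stratified, the local $1$-super-Poincar\'e inequality \eqref{1spilocal} and local $1$-Poincar\'e inequality \eqref{1poincarelocal} are automatic from \cite[Theorem~IV.7.1]{varopoulos1991analysis} and \cite[Theorem~2.1]{jerison1986poincare}, so it remains only to supply the two global ingredients.

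I would first note that \eqref{anisotropicubound}, obtained by developing $\int \nabla(\abs{f}e^{-V}) \cdot \abs{x}^{-1}N\nabla N d\xi$, is the desired $\alpha = 1$ $U$-bound up to a standard correction (absorbed at infinity as in \eqref{criticality1}) and a nonstandard Hardy defect $\int \abs{f}/\abs{x}d\mu$ which arises because $\nabla N \cdot \nabla\abs{x} \lesssim \abs{x}N^{-1}$ is too rough to close the scheme of Lemma \ref{lem:hardy} directly. I would eliminate the Hardy defect in two steps. First, restricting integration by parts to the partial gradient $\widehat{\nabla}$ omitting the problematic coordinates and using the sign-aware splitting $\partial_{x_i} N \cdot \partial_{x_i}\abs{x} = (x_i/\abs{x})(x_i/N)(\varphi_1 - \varphi_2)$ read off from \cite[Equation~2.20]{dagher2022coercive}, where the positive-sign part $\varphi_1 \lesssim \abs{x}^2 N^{-2}$ respects the expected decay, I would derive the refined Hardy \eqref{anisotropichardy3}: $\int \abs{f}/\abs{x} d\mu \lesssim \int \abs{\nabla f}d\mu + \int \abs{f}\abs{x}^2 N^{p-3}d\mu$. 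Second, developing $\int \nabla(\abs{f}e^{-V}) \cdot \nabla N d\xi$ (using $\Delta N \simeq \abs{\nabla N}^2 N^{-1}$) and absorbing the resulting correction at infinity yields the auxiliary bound $\int \abs{f}\abs{x}^2 N^{p-3} d\mu \lesssim \int \abs{\nabla f}d\mu + \int \abs{f}d\mu$. Substituting this into the refined Hardy delivers the standard $L^1$-Hardy, which in turn closes \eqref{anisotropicubound} into the $\alpha = 1$ $U$-bound.

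At this point the situation is identical to the $\alpha = 1$ case of Theorem \ref{thm1}. Interpolating the $U$-bound and $L^1$-Hardy via $cx + x^{-s} \gtrsim c^{s/(s+1)}$ produces the nondegenerate $U$-bound $\int \abs{f}N^{(p-2)/2}d\mu \lesssim \int\abs{\nabla f}d\mu + \int\abs{f}d\mu$. For $p > 2$, the proof of Proposition \ref{prop:spi} applied with the $B_R \sqcup B_R^c$ decomposition gives the $1$-super-Poincar\'e inequality with growth $\beta_1(\epsilon) \lesssim \exp(C\epsilon^{-2p/(p-2)})$, whence the equivalent $L^1\Phi$-entropy inequality with $\Phi(x) = x\log(1+x)^{(p-2)/(2p)}$; combined with a Cheeger inequality obtained from \cite[Theorem~2.6]{inglis2011u}, \eqref{1poincarelocal}, and the nondegenerate $U$-bound, part (iii) of \cite[Theorem~4.5]{inglis2011u} with $q = 2p/(p-2)$ delivers $I_\mu \gtrsim \cU_{q/(q-1)} = \cU_{2p/(p+2)}$. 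The endpoint $p = 2$ is handled by the critical-case argument from the $p = \alpha + 1$ subsection: the combination of \eqref{anisotropicubound}, \eqref{anisotropichardy3}, and the auxiliary $U$-bound, together with $\abs{x}^{-1} + \abs{x} \gtrsim 1$, yields \eqref{cheegerinfinity} directly on $B_R^c$ and hence $I_\mu \gtrsim \cU_1$, coinciding with $\cU_{2p/(p+2)}$ at $p = 2$.

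The main obstacle is the refined Hardy step. Since $N$ is not a Kaplan-type norm, the identity $\nabla N \cdot \nabla\abs{x} \simeq \abs{x}N^{-1}$ is sharp enough to block the Lemma \ref{lem:hardy} argument, and extracting the improved decay requires the finer sign-aware decomposition of $\partial_{x_i}N \cdot \partial_{x_i}\abs{x}$ from the explicit formulas in \cite{dagher2022coercive}. Once that inequality is secured the remainder is a straightforward transcription of the Theorem \ref{thm1} machinery at $\alpha = 1$.
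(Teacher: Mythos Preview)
Your proposal is correct and follows essentially the same route as the paper: obtain the degenerate $U$-bound \eqref{anisotropicubound} with its Hardy defect, remove the defect via the refined Hardy \eqref{anisotropichardy3} obtained from the partial gradient $\hat{\nabla}$ and the sign-aware decomposition $\partial_{x_i}N\cdot\partial_{x_i}\abs{x}=(x_i/\abs{x})(x_i/N)(\varphi_1-\varphi_2)$ with $\varphi_1\lesssim\abs{x}^2N^{-2}$, close the refined Hardy with the auxiliary $U$-bound from developing $\int\nabla(\abs{f}e^{-V})\cdot\nabla N\,d\xi$, and then feed the resulting $\alpha=1$ ingredients into the Theorem~\ref{thm1} machinery (including the endpoint argument at $p=2$). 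The paper presents exactly this chain of estimates and concludes in the same way.
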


\begin{remark}
    This is also optimal since our optimality argument for step-two groups relied just on the existence of a point $(0, t) \in (\bbR^{2n}_x \times \bbR^1_t)^*$ at which $\nabla N$ vanishes. This improves the $2$-logarithmic${}^\beta$ Sobolev inequality, $\beta = (p-3)/p$, proved in \cite[Corollary~10]{dagher2022coercive}, since the proof of our isoperimetric inequality passes through a $2$-super-Poincar\'e inequality with optimal growth $\beta_2(\epsilon) \lesssim \exp(C\epsilon^{-p/(p-2)})$ which implies $\beta = (p-2)/p$ is allowed (and optimal). Moreover, we relax their constraint from $p \geq 4$ to $p \geq 2$ and remove also their dimensional constraint $n > 5$. 
\end{remark}

Before we conclude, we note the isoperimetric inequalities presented in this paper are stable under reasonable perturbations. As seen already in the proof of the $L^1$-Caffarelli-Kohn-Nirenberg inequality, the inequality for $\mu$ was proven by using the inequality for Lebesgue measure, replacing $f$ by $fe^{-V}$, and applying a $U$-bound. It follows that replacing once again $f$ by $fe^{-W}$ in our inequalities for $\mu$ should give an isoperimetric inequality (of the same form modulo constants) for the perturbed measure $d\nu \propto e^{-W}d\mu$, provided $W$ satisfies appropriate assumptions. This is indeed true; for more details see \cite[Corollaries~2.2,~2.5,~5.3]{inglis2011u}. Extension to the infinite-dimensional setting is also possible, see, for instance, \cite[\S6]{inglis2011u}. 

\begin{acknowledgements}
\textup{We would like to thank Andreas Malliaris for helpful discussions. We are also grateful to the Hausdorff Research Institute for Mathematics for their support and hospitality where part of the work on this paper was undertaken and some of these discussions took place. The author is supported by the President’s Ph.D. Scholarship of Imperial College London.}
\end{acknowledgements}

\printbibliography

\end{document}